\theoremstyle{plain}
\newtheorem{theorem}{Theorem}[section]
 \newtheorem{lemma}[theorem]{Lemma}
\newtheorem{remark}{Remark}[section]
\newtheorem{assumption}{Assumption}[section]
\newcommand{\ind}{{\bf 1}}
 \def\beqlb{\begin{eqnarray}}\def\eeqlb{\end{eqnarray}}
 \def\beqnn{\begin{eqnarray*}}\def\eeqnn{\end{eqnarray*}}
\newcommand{\bcen}{\begin{center}}
\newcommand{\ecen}{\end{center}}
\newcommand{\bgeqn}{\begin{equation}}
\newcommand{\edeqn}{\end{equation}}
\begin{document}
\title{Lower deviation probabilities for level sets of the branching random walk}
\author{Shuxiong Zhang}
\maketitle

\renewcommand{\thefootnote}{\fnsymbol{footnote}}

\noindent\textit{Abstract:}
Given a branching random walk$\{Z_n\}_{n\geq0}$ on $\mathbb{R}$, let $Z_n([y,\infty))$ be the number of particles located  in $[y,\infty)$ at generation $n$. It is known from \cite{Biggins1977} that under some mild conditions, $n^{-1}\log Z_n([\theta x^* n,\infty))$ converges a.s. to $\log m-I(\theta x^*)$, where $\log m-I(\theta x^*)$ is a positive constant. In this work, we investigate its lower deviation, in other words, the convergence rates of
$$\mathbb{P}\left(Z_n([\theta x^* n,\infty))<e^{an}\right),$$
where $a\in[0,\log m-I(\theta x^*))$. Our results complete those in \cite{Mehmet}, \cite{Helower} and \cite{GWlower}.

\bigskip
\noindent Mathematics Subject Classifications (2020): 60F10, 60J80, 60G50.

\bigskip
\noindent\textit{Key words and phrases}: Branching random walk; Level sets; Lower deviation.

\section{Introduction and Main results}
\subsection{Background}
In this article, we study the lower deviation probabilities for local mass of the branching random walk (BRW). Given a probability distribution $\{p_k\}_{k\geq 0}$ on $\mathbb{N}$ and a real-valued random variable $X$, a branching random walk $\{Z_n\}_{n\geq 0}$ with offspring law $\{p_k\}_{k\geq 0}$ and step size $X$ is defined as follows. At time $0$, there is one particle located at the origin. The particle dies and produces  offsprings according to the offspring distribution $\{p_k\}_{k\geq 0}$. Afterwards, the offspring particles move independently according to the law of step size $X$, which forms a process $Z_1$. For any point process $Z_n$, $n\geq 2$, we define it by the following iteration
$$Z_n=\sum_{x\in Z_{n-1}}\tilde{Z}_1^{x},$$
where $\tilde{Z}_1^{x}$ has the same distribution as $Z_1(\cdot-S_x)$ and $\{\tilde{Z}_1^{x}: x\in Z_{n-1}\}$  (conditioned on $Z_{n-1}$)  are independent. Here and later, for a point process $\xi$, $x\in\xi$ means $x$ is an atom of $\xi$, and $S_x$ is the position of $x$ (i.e., $\xi=\sum_{x\in\xi}\delta_{S_x}$).\par
For $A\subset\mathbb{R}$, let
$$Z_n(A)=\#\left\{u\in Z_n: S_u\in A\right\},$$
i.e. the number of particles located in the set $A$.\par
Let $m:=Z_1(\mathbb{R})$. According to Biggins \cite[Theorem 2]{Biggins1977}: if $1<m<\infty$ and $\mathbb{E}[e^{\kappa X}]<\infty$ for some $\kappa>0$, then
for $\theta\in[0,1)$,
$$
\lim_{n\rightarrow\infty}\frac{1}{n}\log Z_n([\theta x^*n,\infty))=\log m-I(\theta x^*),~\mathbb{P}-a.s.~\text{on-non extinction},
$$
where $I(x):=\sup\limits_{t\in\mathbb{R}}\{tx-\log \mathbb{E}[e^{tX}]\}$, $x^*=\sup\{x\geq0:I(x)\leq\log m\}$. So, it's natural to study the decay rate of
$$\mathbb{P}(Z_n([\theta x^*n,\infty))< e^{an}),$$
where $a\in[0,\log m-I(\theta x^*))$.\par
 In fact, there are some related results for the branching Brownian motion $\{Z_t\}_{t\geq 0}$: \"{O}z \cite{Mehmet} considered the lower deviation $\mathbb{P}(Z_t(\theta x^*t+B)< e^{at})$, where $B$ is a fixed ball; A\"idekon, Hu and Shi \cite{levelsetzhan} considered the upper deviation $\mathbb{P}(Z_t([\theta x^*t,\infty))>e^{at})$. We will see that the for the branching random walk, the strategy to study this problem and the answers will be very different from theirs.\par
 We also  mention here that, since the last few decades, the model BRW has been extensively studied due to its connection to many fields, such as Gaussian multiplicative chaos, random walk in random environment, random polymer, random algorithms and discrete Gaussian free field etc; see \cite{hushi}, \cite{liu98}, \cite{liu06}, \cite{bramsonding15} and \cite{levelsetzhan} references therein. One can refer to Shi \cite{zhan} for a more detailed overview. The large deviation probabilities (LDP) for BRW and branching Brownian motion (BBM) on real line have attracted many researcher's attention. For example: Hu \cite{yhu}, Gantert and H{\"o}felsauer \cite{GWlower} and Chen and He \cite{Helower} considered the LDP and the moderate deviation of BRW's maximum (for BBM's maximum, see Chauvin and Rouault \cite{chauvin} and Derrida and Shi \cite{derrida16,derrida17,derrida172}); Chen, He \cite{ChenHe} and Louidor, Perkins \cite{Louidor} studied the large deviation of empirical distribution. Some other related works include Rouault \cite{rouault}, Buraczewski and Ma\'slanka  \cite{Burma} and Bhattacharya \cite{bhattacharya}.
\subsection{Main Results}
Before giving our results, we need first introduce some notations. Let $|Z_n|:=Z_n(\mathbb{R})$, $m:=\mathbb{E}[|Z_1|]$ and $b:=\min\{k\geq 0: p_k>0\}$. Recall that $\{p_k\}_{k\geq 0}$ is the offspring law, and $X$ is the step size. In the sequel of this work, we always need the following assumptions.
\begin{assumption}\label{assume}\leavevmode \\
(i) $\mathbb{E}[X]=0$, $\mathbb{P}(X=0)<1$ \text{and} $\mathbb{E}[e^{\kappa X}]<\infty$ for some $\kappa>0;$\\
(ii) $p_0=0$, $p_1<1$ and $\mathbb{E}[|Z_1|]<\infty;$\\
(iii) $a\in\left[0,\log m-I(\theta x^*)\right)$, $\theta\in[0,1).$
\end{assumption}
The first two theorems consider the Schr\"{o}der case (i.e., $p_1>0$).
\begin{theorem}\label{main1}(Schr\"{o}der case, light tail) Assume $p_1>0$, $\mathbb{E}[e^{\kappa X}]<\infty$ for some $\kappa<0$. Then
$$\lim_{n\rightarrow\infty}\frac{1}{n}\log \mathbb{P}\left(Z_n([\theta x^*n,+\infty))<e^{an}\right)
=-\inf\limits_{\rho\in(0,\bar{\rho}]}\left\{\rho\log\frac{1}{p_1}+\rho I\left(-\frac{d}{\rho}\right)\right\},$$
where $\bar{\rho}\in(0,1)$, $d\in[0,+\infty)$ are defined as follows:
\begin{align}
\bar{\rho}&:=\sup\left\{\rho\in(0,1):\log m-I\left(\frac{\theta x^*}{1-\rho}\right)-\frac{a}{1-\rho}\geq0\right\},\cr
d&:=d(\rho)=\sup\left\{h\in[0,+\infty):\log m-I\left(\frac{h+\theta x^*}{1-\rho}\right)-\frac{a}{1-\rho}\geq0\right\}.\nonumber
\end{align}
\end{theorem}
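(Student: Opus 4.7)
The formula on the right-hand side encodes a variational trade-off: for each $\rho\in(0,\bar{\rho}]$, the factor $p_1^{\rho n}$ corresponds to a ``thin'' initial phase in which every particle at generations $0,\ldots,\lfloor\rho n\rfloor-1$ has exactly one child, while $\exp(-\rho n\,I(-d(\rho)/\rho))$ is the Cram\'er cost of driving that unique ancestor down to position $\approx -d(\rho)n$. After this initial depression, the BRW resumes normal exponential growth, but by the very definition of $d(\rho)$ its typical count in $[\theta x^* n,\infty)$ falls just short of $e^{an}$. Optimising over $\rho$ produces the rate.

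\textbf{Lower bound.} Fix $\rho\in(0,\bar{\rho}]$, set $d=d(\rho)$, and pick $\varepsilon>0$. I would consider the event that (i) each particle at generations $0,\ldots,\lfloor\rho n\rfloor-1$ has exactly one child, and (ii) the unique particle $v$ at generation $\lfloor\rho n\rfloor$ satisfies $S_v\in[-(d+\varepsilon)n-1,-(d+\varepsilon)n]$. Event (i) has probability $p_1^{\lfloor\rho n\rfloor}$; conditionally on (i), $S_v$ is a sum of $\lfloor\rho n\rfloor$ i.i.d.\ copies of $X$, and Cram\'er's theorem yields $\mathbb{P}((\text{ii})\mid(\text{i})) = \exp\{-\rho n\,I(-(d+\varepsilon)/\rho)+o(n)\}$. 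Given (i) and (ii), the subtree rooted at $v$ is an independent BRW over $\lceil(1-\rho)n\rceil$ generations, and its count in $[\theta x^* n,\infty)$ equals its count at height $\geq(\theta x^*+d+\varepsilon)n$ above $v$. Biggins' LLN pins this count at $\exp\{(1-\rho)n[\log m-I((\theta x^*+d+\varepsilon)/(1-\rho))]+o(n)\}$, which is strictly less than $e^{an}$ by the definition of $d$ together with $\varepsilon>0$. Hence $\{Z_n([\theta x^* n,\infty))<e^{an}\}$ occurs with probability tending to $1$ on (i)--(ii). Multiplying, taking $(1/n)\log$, letting $\varepsilon\to 0$, and optimising over $\rho\in(0,\bar{\rho}]$ gives the lower bound.

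\textbf{Upper bound.} Since $p_0=0$, $|Z_k|$ is non-decreasing and $\tau:=\inf\{k\geq 1:|Z_k|\geq 2\}$ satisfies $\mathbb{P}(\tau>k)=p_1^k$; on $\{\tau>k\}$ the first $k$ generations trace a random walk $(S_j)_{j\leq k}$ of step $X$. I would write
\begin{equation*}
\mathbb{P}(Z_n([\theta x^* n,\infty))<e^{an}) \leq \mathbb{P}(\tau>\lfloor\bar{\rho} n\rfloor) + \sum_{k=0}^{\lfloor\bar{\rho} n\rfloor}\mathbb{P}\bigl(\tau=k+1,\,Z_n([\theta x^* n,\infty))<e^{an}\bigr),
\end{equation*}
and for each $k=\lfloor\rho n\rfloor$ split further according to whether the spine endpoint $S_k$ lies below $-(d(\rho)-\varepsilon)n$ or not. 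In the depressed case, Cram\'er's theorem bounds the factor by $\exp\{-\rho n\,I(-d(\rho)/\rho)+o(n)\}$. In the non-depressed case, Biggins' LLN forces the expected subtree count in $[\theta x^* n,\infty)$ to exceed $e^{(a+\delta)n}$ for some $\delta=\delta(\rho)>0$, and I would apply a lower-tail concentration estimate for the level-set count to absorb this case into the previous one via an effectively longer ``thin phase''. Summing the $O(n)$ terms and taking the sup over $\rho$ recovers the claimed infimum.

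\textbf{Main obstacle.} The delicate step is the lower-tail estimate in the non-depressed case: one must rule out that, even when the initial spine ends at a non-depressed position, the subtree still fails to populate $[\theta x^* n,\infty)$ by some mechanism other than a long initial spine. I expect to iterate the spine decomposition --- each concentration failure producing a new ``effective thin phase'' --- together with a Dubuc-type polynomial tail $\mathbb{P}(W\leq \varepsilon)\sim c\varepsilon^{\log(1/p_1)/\log m}$ for the Schr\"oder martingale limit of the BRW at the appropriate rate. The boundary behaviour at $\rho\uparrow\bar{\rho}$ (where $d(\rho)\downarrow 0$) and $\rho\downarrow 0$ will additionally require continuity of $\rho\mapsto\rho\log(1/p_1)+\rho I(-d(\rho)/\rho)$ to interchange $\limsup_n$ with $\inf_\rho$.
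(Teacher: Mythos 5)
Your lower bound is essentially the paper's argument: force single births for $\lfloor\rho n\rfloor$ generations, push the lone spine particle down to $\approx -(d+\varepsilon)n$ by Cram\'er, and use Biggins' law of large numbers (applied to the $n-\lfloor\rho n\rfloor$ remaining generations) to show the residual subtree falls short of $e^{an}$ with probability $\geq 0.9$. Letting $\varepsilon\to 0$ and optimising over $\rho$ gives exactly the paper's bound.

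Your upper bound, however, has a genuine gap, and you yourself flag it in the ``main obstacle'' paragraph. Conditioning on $\tau=\inf\{k:|Z_k|\geq 2\}=k+1$ gives you a spine of length $k$, but in the non-depressed case the subtree rooted at the spine endpoint is again a BRW for which you must bound the same lower-tail probability over $n-k$ generations; the proposed ``iterate the spine decomposition'' is not carried out, and without a quantitative induction the argument is circular. The Dubuc-type tail $\mathbb{P}(W\leq\varepsilon)\sim c\varepsilon^{-\log p_1/\log m}$ concerns the Galton--Watson martingale limit and does not directly control the level-set count $Z_n([\theta x^* n,\infty))$. The paper circumvents this with a different decomposition: it tracks $\rho_n$, the (rescaled) first time the \emph{population size} reaches the polynomial threshold $n^3$, rather than the first branching time. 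Two lemmas then close the argument in one step: Lemma~\ref{branchldp} (Gantert--H{\"o}felsauer) gives $\mathbb{P}(|Z_{\lfloor\rho n\rfloor}|<n^3)\approx p_1^{\rho n}$, pricing the ``thin phase'' exactly without any iteration; and once there are $n^3$ particles, all located above $-(d(i/k)-\varepsilon)n$, the probability that every one of their $n^3$ independent sub-BRWs fails to populate $[\theta x^* n,\infty)$ is at most $(1-c)^{n^3}=e^{-\Theta(n^3)}$, which is negligible. The remaining possibility --- that some particle sits below $-(d(i/k)-\varepsilon)n$ while the population is still $\leq n^3$ --- is handled by a first-moment (Markov) bound combined with Cram\'er and again Lemma~\ref{branchldp}. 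Replacing your $\tau$ by a polynomial-population crossing time, and using the Gantert--H{\"o}felsauer lower-deviation estimate for $|Z_n|$, is the idea your proposal is missing.
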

When the condition $\mathbb{E}[e^{\kappa X}]<\infty$ for some $\kappa<0$ fails, we will see, in the following theorem, the decay scale may depend on the tail of step size.
\begin{theorem}\label{main4}(Schr\"{o}der case, heavy tail) Assume $p_1>0$.\\
If $\mathbb{P}(X<-x)=\Theta(1)x^{-\alpha}$ as $x\rightarrow+\infty$ for some $\alpha>0$, then
$$\lim\limits_{n\rightarrow\infty}\frac{1}{\log n}\log\mathbb{P}\left(Z_n([\theta x^*n,\infty))<e^{an}\right)=-\alpha.$$
If $\mathbb{P}(X\leq-x)=\Theta(1)e^{-\lambda x^{\alpha}}$ as $x\rightarrow+\infty$ for some $\alpha\in(0,1)$ and $\lambda>0$, then
$$\lim\limits_{n\rightarrow\infty}\frac{1}{n^{\alpha}}\log\mathbb{P}\left(Z_n([\theta x^*n,\infty))<e^{an}\right)=
\begin{cases}
 -\lambda(1-\theta)x^*, &a\in[0,\log m-I(x^*)];\\
-\lambda\hat{c}, &a\in(\log m-I(x^*),\log m-I(\theta x^*)),
\end{cases}
$$
where $\hat{c}\in(0,x^{*}-\theta x^*)$ is the unique solution of
$$\log m-I(\theta x^*+\hat{c})=a.$$
\end{theorem}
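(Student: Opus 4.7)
The plan is to establish matching lower and upper bounds on $p_n := \mathbb{P}(Z_n([\theta x^* n, \infty)) < e^{an})$. The guiding intuition is that in the Schr\"oder heavy-tail regime the cheapest strategy realizing this rare event is for the root to have a single offspring (cost $p_1$) whose displacement is abnormally large and negative; the heavy-tail assumption makes this sub-exponentially (or polynomially) cheap, which dominates any exponential-cost strategy involving a long spine of $p_1$-choices (as is optimal in the light-tail Theorem \ref{main1}).

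For the \emph{lower bound} I fix $\varepsilon>0$ and set $c_\varepsilon := \hat c + \varepsilon$ (resp.\ $(1-\theta)x^*+\varepsilon$) according to the subcase for $a$. On the event $\Omega_n := \{|Z_1|=1,\, X_1 \le -c_\varepsilon n\}$ the tree reduces to an $(n-1)$-generation BRW shifted by $X_1$, and by the many-to-one lemma of \cite{Biggins1977},
\[
\mathbb{E}\bigl[Z_n([\theta x^* n, \infty))\mid\Omega_n,X_1\bigr] \le m^{n-1}\mathbb{P}(S_{n-1}\ge(\theta x^*+c_\varepsilon)n) \le e^{(\log m - I(\theta x^*+c_\varepsilon))n + o(n)},
\]
whose exponent is strictly smaller than $a$ by the definitions of $\hat c$ and $(1-\theta)x^*$. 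Markov's inequality (or, for $a=0$, the LLN $M_n/n\to x^*$ for the rightmost position) then gives $Z_n([\theta x^* n, \infty)) < e^{an}$ on $\Omega_n$ with conditional probability $1-o(1)$. Combined with $\mathbb{P}(\Omega_n)=p_1\mathbb{P}(X\le -c_\varepsilon n)$ and the tail hypothesis, letting $\varepsilon\downarrow 0$ produces the claimed lower bound on $p_n$ in each subcase.

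For the \emph{upper bound} I decompose on the first generation to obtain the recursion
\[
p_n = \sum_{k\ge 1} p_k\, \mathbb{E}\biggl[\mathbb{P}\biggl(\sum_{i=1}^k Z_{n-1}^{(i)}([\theta x^* n - X_i, \infty)) < e^{an}\,\bigg|\,X_1,\ldots,X_k\biggr)\biggr].
\]
The $k\ge 2$ summands force each of the $k$ independent subtrees to produce a small count, incurring a factor at most $p_{n-1}^k$ which is negligible against the target rate. For $k=1$ I split on whether $X_1 \le -(c_0-\varepsilon)n$ (with $c_0 := \hat c$ or $(1-\theta)x^*$ as above): on the ``big jump'' side the inner probability is trivially at most $1$ and contributes the leading term $p_1\mathbb{P}(X\le -(c_0-\varepsilon)n)$, while on the complement the inner probability is, after a small threshold shift controlled by continuity of $I$, a perturbation $p_{n-1}'$ of $p_{n-1}$. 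Iterating the resulting inequality $p_n \le p_1\mathbb{P}(X\le -(c_0-\varepsilon)n) + p_1 p_{n-1}' + O(p_{n-1}^2)$ telescopes into a geometric series in $p_1^k$ weighted by $\mathbb{P}(X\le -(c_0-\varepsilon)n)$; since the heavy tail decays only sub-exponentially (or polynomially) in $n$ while $p_1^k$ is geometric in $k$, the dominant contribution comes from $k=O(1)$, delivering $p_n \le \mathbb{P}(X\le -(c_0-\varepsilon)n)(1+o(1))$, and $\varepsilon\downarrow 0$ closes the argument.

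The \emph{main obstacle} is to justify this iteration rigorously. Two technical pillars are required: (i) a uniform-in-parameter estimate $p_n(\theta',a')\le C\,p_n(\theta,a)$ for $(\theta',a')$ in a neighborhood of $(\theta,a)$, so that the small threshold shifts inherited at each step of the recursion do not degrade the bound; and (ii) a conditional second-moment estimate on $Z_n([\theta x^* n,\infty))$ on the event that no ancestor has made a big negative jump, so that Chebyshev concentrates the count near its typical (exponentially large) value and thereby super-exponentially suppresses both the $k\ge 2$ terms and the ``no big jump'' branch of the $k=1$ case. The $a=0$ boundary must be handled separately, since $\{Z_n(\cdots)<e^{an}=1\}=\{Z_n(\cdots)=0\}$ is an event on the maximum $M_n$, so one replaces the Markov step by the Hammersley--Kingman--Biggins LLN $M_n/n\to x^*$ to argue that after a shift by $-(1-\theta)x^*n$ the maximum lies below $\theta x^* n$ almost surely.
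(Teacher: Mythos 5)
Your lower bound is essentially the paper's: force $|Z_1|=1$ and $X_1\le -(\hat c+\varepsilon)n$, then show the conditional probability of $\{Z_n(\cdots)<e^{an}\}$ tends to $1$. You reach that last point by a first-moment Markov bound, while the paper invokes Biggins' a.s.\ convergence \reff{locala.s.}/\reff{locala.s.0} plus dominated convergence; both are fine, and the paper's Lemma~\ref{eqwen} already handles $a=0$ uniformly (there $\hat c=(1-\theta)x^*$ and $I(\theta x^*+\hat c+\varepsilon)$ is either $+\infty$ or exceeds $\log m$), so the separate LLN-for-$M_n$ detour you flag is not actually needed.

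The upper bound is where you depart from the paper, and where your sketch has a real gap. You propose a first-generation recursion $p_n\le p_1\mathbb{P}(X\le -(c_0-\varepsilon)n)+p_1 p'_{n-1}+O(p_{n-1}^2)$ and then want to telescope it into a geometric series. The two "pillars" you acknowledge needing — a uniform-in-parameter comparison $p_n(\theta',a')\le C\,p_n(\theta,a)$ and a conditional second-moment bound — are exactly the hard part, and they are not as benign as "a small threshold shift controlled by continuity of $I$." After $k$ iterations you are looking at $\mathbb{P}\bigl(Z_{n-k}([\theta x^* n-S_k,\infty))<e^{an}\bigr)$, and rescaling the level by $n-k$ simultaneously shifts \emph{both} the effective slope $\theta'$ \emph{and} the effective density exponent $a'=an/(n-k)$; these drifts compound, and the iteration can only be run to depth $O(\log n)$ (or $O(\eta n)$) before the shifted parameters leave any neighborhood of $(\theta,a)$. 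You never say where the recursion stops, nor why the residual term there is $o(n^{-\alpha})$, and the intermediate-$X_1$ range (not a big jump, but $|X_1|$ comparable to $n$) is not cleanly absorbed into either branch of your split. So as written the upper bound does not close.

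The paper avoids all of this by decomposing once, at a single logarithmic time $t_n=\lfloor h\log n\rfloor$ with $h>\alpha/(-\log p_1)$, rather than recursing. It splits on whether the fraction of time-$t_n$ particles below $-(\hat c-\varepsilon)n$ exceeds $1/3$. On the complementary event there are $\ge\frac23|Z_{t_n}|$ well-placed particles, each of whose sub-BRWs fails to produce $e^{an}$ descendants only with probability $\le e^{-1}$, so the first branch is bounded by $\mathbb{E}[e^{-\lfloor 2|Z_{t_n}|/3\rfloor}]\lesssim p_1^{t_n}=o(n^{-\alpha})$ via the generating-function asymptotics (Lemma~\ref{genernatefun}). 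The other branch is handled by a one-line Markov inequality reducing to $\mathbb{P}(S_{t_n}<-(\hat c-\varepsilon)n)$, which the Nagaev-type estimate (Lemma~\ref{pareto}) bounds by $C_\alpha t_n^2 n^{-\alpha}$. No recursion, no uniform-in-parameter lemma, no second moment. If you want to salvage your route you would at minimum need to prove your pillar (i) with explicit control of both drifting parameters and specify a truncation depth, which is considerably more work than the paper's single-cut argument.
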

\begin{remark} In fact, in the case $\mathbb{P}(X<-x)=\Theta(1)x^{-\alpha}$, the condition $\mathbb{E}[X]=0$ can be removed in Assumption \ref{assume}.
\end{remark}
The next two theorems consider the B\"{o}ttcher case (i.e. $p_1=0$). As we can see in the
following, the decay scale depends on the tail of step size $X$.
\begin{theorem}\label{main2}(B\"{o}ttcher case, bounded tail) Assume $p_1=0$, $ess\inf X=-L\in(-\infty,0)$. Then
$$\lim_{n\rightarrow\infty}\frac{1}{n}\log\left[-\log \mathbb{P}\left(Z_n\left([\theta x^*n,+\infty)\right)<e^{an}\right)\right]=
\begin{cases}
\frac{(1-\theta)x^*}{L+x^*}\log b, & a\in[0,a^*);\cr
\bar{c}\log b, & a\in[a^*,\log m-I(\theta x^*)),
\end{cases}
$$
where $a^*:=\left[(\log m-I(x^*))\frac{L+\theta x^*}{L+x^*}+\frac{(1-\theta)x^*}{L+x^*}\log b\right]\wedge[\log m-I(\theta x^*)]$, and $\bar{c}\in(0,1)$ is the unique solution of
 $$\log m-I\left(\frac{\theta x^*+L\bar{c}}{1-\bar{c}}\right)-\frac{a-\bar{c}\log b}{1-\bar{c}}=0.$$
\end{theorem}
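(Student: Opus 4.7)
Set $p_n := \mathbb{P}(Z_n([\theta x^*n,\infty))<e^{an})$ and
$$c^* := \begin{cases} \dfrac{(1-\theta)x^*}{L+x^*} & \text{if } a \in [0,a^*),\\[4pt] \bar c & \text{if } a \in [a^*,\log m - I(\theta x^*));\end{cases}$$
the goal is $\frac{1}{n}\log(-\log p_n)\to c^*\log b$. The plan is to split the argument into a constructive lower bound on $p_n$ and a matching upper bound that exploits the B\"ottcher independence of subtrees.

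\noindent\emph{Constructive half.} Given $c>c^*$ and small $\epsilon>0$, I would set $T=\lfloor cn\rfloor$, $q(\epsilon):=\mathbb{P}(X\le-L+\epsilon)>0$, and define $A_n$ as the event that every particle in generations $0,\dots,T-1$ has exactly $b$ offspring and every offspring has displacement $\le-L+\epsilon$. An edge count on the minimal $(b,T)$-ary tree yields
$$\mathbb{P}(A_n)\ge p_b^{(b^T-1)/(b-1)}\,q(\epsilon)^{b(b^T-1)/(b-1)}\ge\exp\bigl(-K(\epsilon)\,b^{cn}\bigr).$$
On $A_n$ there are exactly $b^T$ particles at generation $T$, all at positions $\le-(L-\epsilon)T$, with i.i.d.\ subtrees of height $n-T$. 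In the regime $a\ge a^*$, the choice $c>\bar c$ makes
$f(c):=\log m-I\bigl(\tfrac{\theta x^*+Lc}{1-c}\bigr)-\tfrac{a-c\log b}{1-c}$ strictly negative (by uniqueness of $\bar c$ and $f(0)=\log m-I(\theta x^*)-a>0$), so the many-to-one lemma together with Cram\'er's theorem give $\mathbb{E}[Z_n([\theta x^*n,\infty))\mid A_n]\le e^{(a-\delta)n}$ for some $\delta>0$, and Markov yields $\mathbb{P}(Z_n\ge e^{an}\mid A_n)\to0$. In the regime $a<a^*$ the Markov bound is not sharp: $v_\epsilon:=\tfrac{\theta x^*+(L-\epsilon)c}{1-c}\to x^*$ as $(c,\epsilon)\to(c^*,0)$ and $I(v_\epsilon)-\log m\to 0$; here I would instead use the inclusion $\{M_n<\theta x^*n\}\subset\{Z_n([\theta x^*n,\infty))<e^{an}\}$ combined with the B\"ottcher--bounded-tail asymptotic $\mathbb{P}(M_n<yn)\asymp\exp\bigl(-C\,b^{(x^*-y)n/(L+x^*)}\bigr)$ for $y<x^*$, obtained by the same push-to-essential-infimum construction specialized to $a=0$. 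In either regime, letting $c\downarrow c^*$ then $\epsilon\downarrow0$ gives $\limsup\frac{1}{n}\log(-\log p_n)\le c^*\log b$.

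\noindent\emph{Matching upper bound on $p_n$.} Fix $c'<c^*$ and put $T=\lfloor c'n\rfloor$. The B\"ottcher hypothesis $p_1=0$ gives $|Z_T|\ge b^T$ deterministically, and conditionally on $\mathcal{F}_T$ the subtrees of height $n-T$ are independent. Since $S_u\ge-LT$ always, the worst-case target slope is $v_u\le v_{\max}:=\tfrac{\theta x^*+Lc'}{1-c'}<x^*$ (as $c'<c^*$). I would then invoke a \emph{quantitative} version of Biggins' LLN---obtainable via Paley--Zygmund applied to the second moment of the additive martingale $W_n(t)=\sum_{|u|=n}e^{tS_u-n\kappa(t)}$ at the optimal $t=t(v_{\max})$---producing constants $q,\epsilon>0$ with, uniformly in $u\in Z_T$,
$$\mathbb{P}\Bigl(Z_{n-T}^u([\theta x^*n-S_u,\infty))\ge\exp\bigl((\log m-I(v_{\max})-\epsilon)(n-T)\bigr)\,\Big|\,\mathcal{F}_T\Bigr)\ge q.$$
A Chernoff bound on the $b^T$ independent success indicators yields at least $\tfrac{q}{2}b^T$ successes except with probability $\exp(-c_0 b^T)$; on this event
$$Z_n([\theta x^*n,\infty))\ge\tfrac{q}{2}\,b^{c'n}\,\exp\bigl((\log m-I(v_{\max})-\epsilon)(n-T)\bigr)\ge e^{an},$$
the last inequality using $c'<c^*$ together with the identity $\tfrac{\theta x^*+Lc^*}{1-c^*}=x^*$ (regime~1) or the sign of $f$ near $\bar c$ (regime~2), and choosing $\epsilon$ small. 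Letting $c'\uparrow c^*$ completes the proof.

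\noindent\emph{Main obstacle.} The crux is the quantitative Biggins estimate: stronger than the a.s.\ Biggins LLN, it requires a uniform positive lower bound on subtree success probability, relying on $L^2$-boundedness of $W_n(t)$ at $t=t(v_{\max})$. A secondary subtlety is the crossover $a=a^*$: in regime~1 the push event $A_n$ alone does not give the sharp constant for $a>0$ and must be combined with the B\"ottcher lower deviation of $M_n$, while near the crossover both $I(v_\epsilon)-\log m$ and $-f(c)$ degenerate, so the limits $c\downarrow c^*$, $\epsilon\downarrow 0$ and $c'\uparrow c^*$ must be ordered carefully to keep each finite-$n$ rate bounded away from zero.
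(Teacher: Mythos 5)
Your lower bound (constructive half) is essentially sound: forcing a $b$-regular tree with near-essential-infimum displacements is exactly the paper's strategy, and both your Markov-first-moment route in the regime $a\ge a^*$ and your black-box invocation of the lower-deviation asymptotic for the maximum in the regime $a<a^*$ would deliver the right exponent. (The paper instead handles both regimes in a unified way by appealing directly to Biggins' a.s.\ convergence, using \reff{locala.s.} when the target slope $\frac{\theta x^*+L'c}{1-c}$ falls below $x^*$ and \reff{locala.s.0} when it exceeds $x^*$; this is somewhat cleaner than stitching together Markov and the maximum result, but both routes work.)

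The upper bound, however, has a genuine gap. You rest the whole argument on a quantitative Biggins estimate, ``uniform in $u\in Z_T$,'' obtained via Paley--Zygmund applied to the additive martingale $W_n(t)$ at $t=t(v_{\max})$. This requires $L^2$-boundedness of $W_n(t(v_{\max}))$, which generically fails as $v_{\max}$ approaches $x^*$: the $L^2$ threshold for the additive martingale is strictly below the critical tilt $t^*=t(x^*)$ (already for binary BBM, $L^2$ needs $t<\sqrt{\log 2}$ while $t^*=\sqrt{2\log 2}$). In regime~1, $v_{\max}=\frac{\theta x^*+Lc'}{1-c'}\uparrow x^*$ as $c'\uparrow c^*$, so the Paley--Zygmund route breaks down precisely in the limit you need; in regime~2 there is no such forced degeneracy, but $L^2$-boundedness at $t\big(\frac{\theta x^*+L\bar c}{1-\bar c}\big)$ is still an unjustified extra hypothesis. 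Moreover, even inside the $L^2$ window, turning ``$W_n(t)$ is not too small'' into a lower bound on the counting function $Z_n([vn,\infty))$ requires a localization/truncation argument (Biggins' uniform convergence results), which you do not address. The paper avoids the need for any quantitative or uniform Biggins estimate via a different mechanism: after splitting on whether at least $\lfloor\epsilon b^{t_n}\rfloor$ of the $b^{t_n}$ subtrees survive to the target half-line (controlled by Stirling's bound on $\binom{b^{t_n}}{\lfloor\epsilon b^{t_n}\rfloor}$), it applies an exponential Chebyshev bound to $\sum_j\frac{1}{n-t_n}\log Z^{(j)}_{n-t_n}(\cdots)$; the resulting expectation is then killed by the dominated convergence theorem, since on the event $Z_{n-t_n}(\cdots)\ge 1$ the exponent is bounded above and converges a.s.\ (by the qualitative Biggins LLN) to a strictly negative constant. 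This DCT device is exactly what supplies the ``quantitative'' input you were looking for, without ever needing $L^2$-boundedness or Paley--Zygmund, and it is the key idea missing from your proposal.
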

\begin{remark} If $I(x^*)=\log m$, then we have $a^*<\log m-I(\theta x^*)$. In fact, since $I(0)=0$ and $I(x)$ is convex, we have
$$I(\theta x^*)=I((1-\theta)0+\theta x^*)\leq (1-\theta)I(0)+\theta I(x^*)\leq \theta\log m.$$
Thus,
$$\frac{(1-\theta)x^*}{L+x^*}\log b+I(\theta x^*)<\frac{(1-\theta)x^*}{x^*}\log m+\theta\log m\leq \log m,$$
which implies $a^*<\log m-I(\theta x^*)$.
\end{remark}
\begin{remark}As we can see there is a phase transition at $a=a^*$. This is because $e^{a^*n}$ is the minimum population size that $b^{t_n}$ branching random walks located at $-Lt_n$ can contribute to $[\theta x^*n,\infty)$ in the remaining $n-t_n$ time. Thus when $a<a^*$, to achieve $\{Z_n\left([\theta x^*n,+\infty)\right)<e^{an}\}$ means there is no particles in $[\theta x^*n,+\infty)$ at time $n$. Hence, in this case, the lower deviation behaviour is exactly the same as the maximum of BRW (see \cite[Theorem 1.1]{Helower}).
\end{remark}
The following theorem considers the step size has Weibull tail. Recall that $\hat{c}$ is the unique solution of the equation
$\log m-I(\theta x^*+\hat{c})=a.$
\begin{theorem}\label{main3}(B\"{o}ttcher case, Weibull tail) Assume $p_1=0$, $\mathbb{P}(X\leq-x)=\Theta(1)e^{-\lambda x^{\alpha}}$ as $x\rightarrow+\infty$ for some $\alpha\in(0,+\infty)$ and $\lambda>0$. Then
$$\lim_{n\rightarrow\infty}\frac{1}{n^{\alpha}}\log\mathbb{P}\left(Z_n([\theta x^*n,+\infty))<e^{an}\right)=-\lambda C(x^*,\theta,b,\alpha,m,a),$$
where
$$
C(x^*,\theta,b,\alpha,m,a)
=\begin{cases}
b(x^{*}-\theta x^*)^{\alpha}, &{\alpha \in(0,1],~a\in[0,\log m-I(x^*)]}; \cr
b\hat{c}^{\alpha}, &{\alpha \in(0,1],~a\in(\log m-I(x^*),\log m-I(\theta x^*))};\cr
\left(b^{\frac{1}{\alpha-1}}-1\right)^{\alpha-1}(x^{*}-\theta x^*)^{\alpha},&{\alpha\in(1,+\infty),~a\in[0,\log m-I(x^*)]};\cr
\left(b^{\frac{1}{\alpha-1}}-1\right)^{\alpha-1}\hat{c}^{\alpha}, &{\alpha \in(1,+\infty),~a\in(\log m-I(x^*),\log m-I(\theta x^*))}.
\end{cases}
$$
\end{theorem}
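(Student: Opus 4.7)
The plan is to prove matching upper and lower bounds on $-n^{-\alpha}\log\mathbb{P}(Z_n([\theta x^{*}n,\infty))<e^{an})$. Set $c^{*}:=(1-\theta)x^{*}$ when $a\in[0,\log m-I(x^{*})]$ and $c^{*}:=\hat{c}$ when $a\in(\log m-I(x^{*}),\log m-I(\theta x^{*}))$; the two definitions agree at the boundary since $\hat c=(1-\theta)x^{*}$ when $a=\log m-I(x^{*})$. The target value is then $\lambda b(c^{*})^{\alpha}$ for $\alpha\in(0,1]$ and $\lambda(b^{1/(\alpha-1)}-1)^{\alpha-1}(c^{*})^{\alpha}$ for $\alpha>1$. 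Both halves of the proof rest on the B\"ottcher condition $|Z_{k}|\geq b^{k}$, on Biggins' level-set asymptotics applied inside subtrees, and on the sharp Weibull tail $\mathbb{P}(X\leq-x)=\Theta(1)e^{-\lambda x^{\alpha}}$.

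\textbf{Construction (lower bound on $\mathbb{P}$).} For $\alpha\in(0,1]$, concavity of $x\mapsto x^{\alpha}$ favours concentrating the displacement on a single generation: I would require each of the $b$ first-generation children of the root to take a step $\leq-(c^{*}+\varepsilon)n$. Independence gives joint probability $\geq\exp\{-\lambda b(c^{*}+2\varepsilon)^{\alpha}n^{\alpha}\}$, and on this event Biggins ensures that each of the $b$ subtrees of length $n-1$ contributes fewer than $b^{-1}e^{an}$ particles to $[\theta x^{*}n,\infty)$ with high conditional probability. For $\alpha>1$, convexity reverses the optimisation: I would instead prescribe a per-edge step $\leq-d_{j}n$ at every vertex of generation $j\in\{1,\dots,K_{n}\}$ of a $b$-ary skeleton, with $d_{j}=Ab^{-j/(\alpha-1)}$ and $A$ tuned so that $\sum_{j\geq 1}d_{j}\geq c^{*}+\varepsilon$. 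The joint cost is
\[
-\log\mathbb{P}\sim\lambda n^{\alpha}\sum_{j\geq 1}b^{j}d_{j}^{\alpha}=\lambda n^{\alpha}(b^{1/(\alpha-1)}-1)^{\alpha-1}(c^{*}+\varepsilon)^{\alpha},
\]
where I used $\sum_{j\geq 1}b^{-j/(\alpha-1)}=(b^{1/(\alpha-1)}-1)^{-1}$. Letting $\varepsilon\downarrow 0$ produces the claimed upper bound on $-n^{-\alpha}\log\mathbb{P}$.

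\textbf{Matching (upper bound on $\mathbb{P}$).} The aim is to show that any realisation of the rare event is essentially forced into the above strategy. Fix a $b$-ary sub-skeleton $T_{K_{n}}\subset Z_{K_{n}}$ of depth $K_{n}=\lfloor\log\log n\rfloor$; then
\[
Z_{n}([\theta x^{*}n,\infty))\geq\sum_{v\in T_{K_{n}}}\widetilde Z^{(v)}_{n-K_{n}}([\theta x^{*}n-S_{v},\infty)),
\]
and a uniform-in-$y$ Biggins lower bound shows that unless $S_{v}\leq-(c^{*}-\varepsilon)n$ for all but a negligible fraction of $v\in T_{K_{n}}$, the right-hand side exceeds $e^{an}$ with overwhelming probability. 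On the rare event the displacement profile along the skeleton therefore satisfies $\sum_{j=1}^{K_{n}}d_{j}\geq c^{*}-\varepsilon$, where $-d_{j}n$ is the typical edge displacement at generation $j$. Factorising the joint probability over generations reduces matters to the deterministic optimisation $\min\sum_{j\geq 1}b^{j}d_{j}^{\alpha}$ under $\sum_{j}d_{j}\geq c^{*}-\varepsilon$; this minimum equals $b(c^{*}-\varepsilon)^{\alpha}$ when $\alpha\leq 1$ (achieved at $d_{1}=c^{*}-\varepsilon$ by concavity) and $(b^{1/(\alpha-1)}-1)^{\alpha-1}(c^{*}-\varepsilon)^{\alpha}$ when $\alpha>1$ (achieved at $d_{j}\propto b^{-j/(\alpha-1)}$ by Lagrange multipliers). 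Letting $\varepsilon\downarrow 0$ matches the construction.

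\textbf{Main obstacle.} The central technical point is the uniform-in-$y$ Biggins lower bound invoked above: one must establish $Z_{m}([y,\infty))\geq\exp\{m(\log m-I(y/m))-\varepsilon m\}$ simultaneously for a polynomial family of levels $y$, with failure probability at most $\exp(-\omega(n^{\alpha}))$ so that a union bound over the $b^{K_{n}}$ subtrees is negligible compared with the target decay $e^{-\lambda C n^{\alpha}}$. This sharper control does not follow from the basic convergence in \cite{Biggins1977} and will require a truncated many-to-one computation combined with a second-moment or change-of-measure (spine) argument calibrated to the Weibull scale. The only other delicate point is correctly separating the two regimes in $a$, but these glue smoothly since $\hat c$ continuously meets $(1-\theta)x^{*}$ at the boundary.
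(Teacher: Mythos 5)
Your lower-bound construction matches the paper's essentially exactly: for $\alpha\in(0,1]$ you concentrate the whole displacement on generation one (force the $b$ children of the root below $-c^{*}n$), and for $\alpha>1$ you force a $b$-ary skeleton with the geometric profile $d_{j}\propto b^{-j/(\alpha-1)}$, which is the paper's $a_{k}=\frac{b_{\alpha}-1}{b_{\alpha}^{k}}(\hat{c}+\delta)n$ with $b_{\alpha}=b^{1/(\alpha-1)}$; the optimisation and the resulting constant $(b^{1/(\alpha-1)}-1)^{\alpha-1}(c^{*})^{\alpha}$ are the same, and your depth $K_{n}=\lfloor\log\log n\rfloor$ is in fact enough for that half of the argument.

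The upper bound, however, contains a scale error that makes the argument fail, and the ``main obstacle'' you identify is not the right one. With $K_{n}=\lfloor\log\log n\rfloor$ you only have $b^{K_{n}}=(\log n)^{\log b}$ subtrees. In the regime where a positive fraction of $v\in T_{K_{n}}$ satisfy $S_{v}>-(c^{*}-\varepsilon)n$, the probability that all of these well-positioned subtrees simultaneously fail to reach $e^{an}$ descendants is of order $0.9^{\Theta(b^{K_{n}})}=\exp\{-\Theta((\log n)^{\log b})\}$, which is \emph{much larger} than the target $e^{-\Theta(n^{\alpha})}$: this term would dominate and the bound would not close. One needs $b^{t_{n}}\gg n^{\alpha}$, and the paper accordingly takes $t_{n}=\lfloor\frac{2\alpha}{\log b}\log n\rfloor$. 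Your alternative, a ``uniform-in-$y$ Biggins lower bound'' with per-subtree failure probability $\exp(-\omega(n^{\alpha}))$, is not available and is in fact circular: such a failure probability is precisely the lower-deviation probability for a level set of BRW, and Theorem \ref{main3} itself shows it is $\exp(-\Theta(n^{\alpha}))$, not $\exp(-\omega(n^{\alpha}))$. The paper avoids any such sharp single-subtree estimate by just raising a constant ($\le 0.9$) failure probability to a power $b^{t_{n}-1}=\omega(n^{\alpha})$. Finally, the remaining term (``few particles above $-(\hat{c}-\delta)n$ at time $t_{n}$'') is not handled by the generation-by-generation factorisation you sketch, which ignores the dependence of $S_{v}$ along shared ancestral paths; the paper instead uses a clean first-moment/Markov bound combined with the Weibull large-deviation lemma when $\alpha\le 1$, and quotes estimate (3.36) of \cite{Helower} for $\alpha>1$, which carries out exactly the tree optimisation you want but over the full genealogical structure.
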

\begin{remark}In fact, using the same idea of proving Theorem \ref{main3}, one could get the results below for Gumbel step size and Pareto step size.\\
If $p_1=0$, $\mathbb{P}(X\leq-x)=\Theta(1)\exp\{-e^{x^\alpha}\}$ as $x\rightarrow+\infty$ for some $\alpha>0$, then
$$
\lim_{n\rightarrow\infty}\frac{\log-\log\mathbb{P}\left(Z_n([\theta x^*n,+\infty))<e^{an}\right)}{n^{-\frac{\alpha}{1+\alpha}}}=
\begin{cases}
\left(\frac{\alpha}{1+\alpha}\log b\right)^{\frac{\alpha}{1+\alpha}}(x^*-\theta x^*)^{\frac{\alpha}{1+\alpha}}, &~a\in[0,\log m-I(x^*)];\cr
\left(\frac{\alpha}{1+\alpha}\log b\right)^{\frac{\alpha}{1+\alpha}}{\hat{c}}^{\frac{\alpha}{1+\alpha}}, &~a\in(\log m-I(x^*),\log m-I(\theta x^*)).
\end{cases}
$$
If $p_1=0$, $\mathbb{P}(X<-x)=\Theta(1)x^{-\alpha}$ as $x\rightarrow+\infty$ for some $\alpha>0$, then
$$\lim\limits_{n\rightarrow\infty}\frac{1}{\log n}\log\mathbb{P}\left(Z_n([\theta x^*n,\infty))<e^{an}\right)=-\alpha b.$$
\end{remark}
The rest of this paper is organised as follows. In Section \ref{sec1} and Section \ref{sec4}, we study the Schr\"oder case, where Theorem \ref{main1} and Theorem \ref{main4} are proved. Section \ref{sec1} considers the step size has a negative exponential moment. And Section \ref{sec4} study the case when the step size has heavy tails. Section \ref{sec2} and Section \ref{sec3} consider the B\"ottcher case, where Theorem \ref{main2} and Theorem \ref{main3} are proved. In Section \ref{sec2}, the step size is assumed to be bounded below. Section \ref{sec3} treat the Weibull step size, and this section is divided into two subsections: in the first section, we study the sub Weibull case; in the second section, we study the super Weibull case. As usual, $f(x)=\Theta(1)g(x)$ as $x\rightarrow+\infty$ means there exist constants $C\geq C'>0$ such that $C'\leq|f(x)/g(x)|\leq C$ for all $x>1$.

\section{Proof of Theorem \ref{main1}: Schr\"{o}der case, light tail}\label{sec1}
In this section, we are going to prove Theorem \ref{main1}. We first present several lemmas.
The following lemma can be found in \cite[Theorem 2]{Biggins1977}.
\begin{lemma} Assume $1<m<\infty$ and $\mathbb{E}[e^{\kappa X}]<\infty$ for some $\kappa>0$.\\
If $\theta\in[0,1)$, then
$$
\lim_{n\rightarrow\infty}\frac{1}{n}\log Z_n([\theta x^*n,\infty))=\log m-I(\theta x^*),~\mathbb{P}-a.s.~\text{on-non extinction},
$$
where $I(x):=\sup\limits_{t\in\mathbb{R}}\{tx-\log \mathbb{E}[e^{tX}]\}$, $x^*=\sup\{x\geq0:I(x)\leq\log m\}$.\\
If $\theta\in(1,+\infty)$, then
$$
\lim_{n\rightarrow\infty}Z_n([\theta x^*n,\infty))=0,~\mathbb{P}-a.s.
$$
\end{lemma}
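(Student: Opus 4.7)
The result is the classical theorem of Biggins, and my plan is to combine the many-to-one identity with Biggins' additive martingale. Throughout, let $(S_n)_{n\geq 0}$ denote a random walk with step distribution $X$, so that
\[\mathbb{E}[Z_n(A)] = m^n\, \mathbb{P}(S_n \in A)\]
for every Borel set $A \subset \mathbb{R}$.

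Consider first $\theta \in [0,1)$. For the upper bound, Cram\'er's theorem gives $\mathbb{P}(S_n \geq \theta x^* n) \leq e^{-n I(\theta x^*) + o(n)}$, hence $\mathbb{E}[Z_n([\theta x^* n, \infty))] \leq e^{n(\log m - I(\theta x^*)) + o(n)}$. Applying Markov's inequality with threshold $e^{n(\log m - I(\theta x^*) + \varepsilon)}$, the resulting bound is summable in $n$ for every $\varepsilon > 0$, and Borel--Cantelli yields
\[\limsup_{n\to\infty} \frac{1}{n} \log Z_n([\theta x^* n, \infty)) \leq \log m - I(\theta x^*) \quad \text{almost surely}.\]
For the matching lower bound, choose the Cram\'er tilt $t = t(\theta x^*)$ satisfying $(\log \mathbb{E}[e^{tX}])' = \theta x^*$, and introduce the additive martingale $W_n(t) := m^{-n}\mathbb{E}[e^{tX}]^{-n} \sum_{|u|=n} e^{tS_u}$. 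Because $\theta < 1$ (equivalently, $\log m - I(\theta x^*) > 0$), the tilt $t$ lies strictly below Biggins' critical parameter, so $W_n(t)$ is uniformly integrable with a.s.\ limit $W(t) > 0$ on non-extinction. From the crude bound
\[Z_n([\theta x^* n, \infty)) \geq e^{-t\theta x^* n}\sum_{|u|=n} e^{tS_u} \mathbf{1}_{\{S_u \geq \theta x^* n\}},\]
together with a local central limit theorem under the tilted measure (whose mean drift equals $\theta x^*$) showing that a non-vanishing fraction of the total weight $\sum_{|u|=n} e^{tS_u}$ comes from particles with $S_u \in [\theta x^* n, \theta x^* n + O(\sqrt{n})]$, one obtains $\liminf_n n^{-1}\log Z_n([\theta x^* n,\infty)) \geq \log m - I(\theta x^*)$ on non-extinction.

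For $\theta \in (1, +\infty)$, the definition of $x^*$ combined with the strict convexity of $I$ on $(x^*, +\infty)$ gives $I(\theta x^*) > \log m$. Hence $\mathbb{E}[Z_n([\theta x^* n, \infty))] \leq e^{-n(I(\theta x^*) - \log m) + o(n)}$ is summable, and since $Z_n([\theta x^* n,\infty))$ is $\mathbb{N}$-valued, Markov's inequality plus Borel--Cantelli force $Z_n([\theta x^* n, \infty)) = 0$ eventually almost surely.

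The main obstacle lies in the lower bound for $\theta \in [0,1)$: convergence of $W_n(t)$ only controls the exponentially weighted sum $\sum_u e^{tS_u}$, not the count of particles near the specific level $\theta x^* n$. Passing from the former to the latter requires a localization step — either a ballot-type estimate along the Biggins spine or a direct second-moment bound on the contribution of particles inside the target window — and this is the technical heart of Biggins' original proof.
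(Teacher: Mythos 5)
The paper does not prove this lemma; it simply cites it as \cite[Theorem~2]{Biggins1977}, so there is no ``paper proof'' to compare against and your task is really to check whether your sketch could in principle stand on its own.

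Your outline of the upper bound for $\theta\in[0,1)$ and of the $\theta>1$ case (many-to-one, Cram\'er, Markov, Borel--Cantelli) is sound. The lower bound sketch, however, has two concrete problems. First, the displayed inequality
\[
Z_n([\theta x^* n,\infty))\;\geq\; e^{-t\theta x^* n}\sum_{|u|=n} e^{tS_u}\mathbf{1}_{\{S_u\ge \theta x^* n\}}
\]
runs in the wrong direction for the relevant case $t>0$: if $S_u\ge\theta x^* n$ and $t>0$ then $e^{-t\theta x^* n}e^{tS_u}\ge 1$, so the right side dominates the left, not the other way around. The correct localization bounds the \emph{count in a window}, e.g.\ $Z_n([\theta x^* n,\theta x^* n+C\sqrt n])\ge e^{-t(\theta x^* n+C\sqrt n)}\sum_u e^{tS_u}\mathbf{1}_{\{S_u\in[\theta x^* n,\theta x^* n+C\sqrt n]\}}$, which is in the spirit of the local CLT argument you describe in prose but not what you wrote. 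Second, deducing that $W_n(t)$ is uniformly integrable with a strictly positive limit on non-extinction requires an $L\log L$-type moment condition ($\mathbb{E}[W_1(t)\log^+ W_1(t)]<\infty$, which for the offspring part amounts to $\mathbb{E}[|Z_1|\log^+|Z_1|]<\infty$). The lemma as stated assumes only $1<m<\infty$ and an exponential moment on $X$; your argument therefore does not cover the stated generality, and some device (for instance a truncation of the offspring law, which only decreases the count and still yields the right exponential rate after letting the truncation level tend to infinity) is needed to close that gap. You flag the localization issue yourself as ``the technical heart,'' which is fair, but the moment-condition issue is an additional gap worth being explicit about.
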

The following remark is a direct consequence of above Lemma.
\begin{remark}\label{qswe} Let $\{\rho_n\}_{n\geq 1}$ be a sequence of numbers such that $\lim\limits_{n\rightarrow\infty}\rho_n=1$.\\
If $\theta\in[0,1)$, then
\begin{equation}\label{locala.s.}
 \lim_{n\rightarrow\infty}\frac{1}{n}\log Z_n([\rho_n\theta x^*n,\infty))=\log m-I(\theta x^*), ~\mathbb{P}-a.s.
 \end{equation}
If $\theta\in(1,+\infty)$, then
\begin{equation}\label{locala.s.0}
 \lim_{n\rightarrow\infty}Z_n([\rho_n\theta x^*n,\infty))=0,~\mathbb{P}-a.s.
\end{equation}
\end{remark}
\begin{remark} Biggins \cite{Biggins1979,Biggins1992} also studied the a.s. behaviour of $Z_n(\theta x^*n+B)$, where $B$ is a bounded measurable set.
\end{remark}
The next lemma is the well-known Cram\'er theorem; see \cite[Theorem 2.2.3]{Dembo}. Here and later, we define $S_n:=X_1+X_2+...+X_n$, where $X_i, i\geq 1$ are i.i.d copies of the step size $X$.
\begin{lemma} If $\mathbb{E}[e^{-\kappa X}]<\infty$ for some $\kappa>0$, then
$$\lim_{n\rightarrow\infty}\frac{1}{n}\log\mathbb{P}(S_n<-nx)=-I(-x).$$
\end{lemma}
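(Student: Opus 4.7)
The plan is to apply the classical Cram\'er argument (see \cite[Theorem~2.2.3]{Dembo}) in two matching steps: an exponential Markov bound for the upper estimate, and an exponential change of measure for the lower estimate.

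\textbf{Upper bound.} For every $t\ge 0$, Markov's inequality applied to $e^{-tS_n}$ gives
$$
\mathbb{P}(S_n<-nx)\le e^{-tnx}\,\mathbb{E}\bigl[e^{-tX}\bigr]^n=\exp\bigl\{-n\bigl(tx-\Lambda(-t)\bigr)\bigr\},\qquad \Lambda(s):=\log\mathbb{E}[e^{sX}].
$$
Because $\mathbb{E}[e^{-\kappa X}]<\infty$, $\Lambda(-t)$ is finite on a neighbourhood of $t=0$. Taking $\frac{1}{n}\log$ and optimising over $t\ge 0$ reduces the task to the identity $\sup_{t\ge 0}(tx-\Lambda(-t))=I(-x)$. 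This holds because the concave function $s\mapsto -sx-\Lambda(s)$ has derivative $-x-\mathbb{E}[X]=-x\le 0$ at $s=0$ (using $\mathbb{E}[X]=0$ from Assumption~\ref{assume}(i) and $x\ge 0$), so its unrestricted maximiser necessarily lies in $(-\infty,0]$.

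\textbf{Lower bound.} Fix a small $\eta>0$ and let $s_\eta<0$ be the unique point with $\Lambda'(s_\eta)=-x-\eta$. Define the tilted law $\widetilde{\mathbb{P}}_\eta$ by
$$
\frac{d\widetilde{\mathbb{P}}_\eta}{d\mathbb{P}}\bigg|_{\sigma(X_1,\dots,X_n)}=\exp\bigl(s_\eta S_n-n\Lambda(s_\eta)\bigr).
$$
Under $\widetilde{\mathbb{P}}_\eta$, the $X_i$ are i.i.d.\ with mean $\Lambda'(s_\eta)=-x-\eta$, so by the weak law of large numbers the event $B_n:=\{-n(x+2\eta)<S_n<-nx\}$ satisfies $\widetilde{\mathbb{P}}_\eta(B_n)\to 1$. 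Inverting the tilt and using $s_\eta<0$ to bound $e^{-s_\eta S_n}\ge e^{s_\eta n(x+2\eta)}$ pointwise on $B_n$ yields
$$
\mathbb{P}(S_n<-nx)\ge\mathbb{P}(B_n)\ge\exp\bigl\{n(s_\eta x+\Lambda(s_\eta))+2ns_\eta\eta\bigr\}\,\widetilde{\mathbb{P}}_\eta(B_n).
$$
By the definition of $I$, $s_\eta x+\Lambda(s_\eta)=-I(-x-\eta)-s_\eta\eta$. Taking $\frac{1}{n}\log$, then $n\to\infty$, and finally $\eta\downarrow 0$ (invoking continuity of $I$ at $-x$) gives $\liminf_n\frac{1}{n}\log\mathbb{P}(S_n<-nx)\ge -I(-x)$, matching the upper bound.

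\textbf{Main obstacle.} The only subtle point is verifying that the Legendre optimiser for $I(-x)$ lies in the interior of the effective domain of $\Lambda$, equivalently that the tilts $s_\eta$ are well defined for all sufficiently small $\eta$. In the range of $x$ relevant here (namely $0\le x\le x^*$ with $I(x^*)\le\log m<\infty$) the two-sided exponential moment hypothesis $\mathbb{E}[e^{\pm\kappa X}]<\infty$ gives essential smoothness of $\Lambda$ and makes $s_\eta$ vary continuously to its $\eta=0$ limit; at the boundary of the domain one invokes lower semi-continuity of $I$ instead.
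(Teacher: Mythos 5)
The paper does not prove this lemma at all; it simply states it as the classical Cram\'er theorem and cites \cite[Theorem 2.2.3]{Dembo}. Your argument is precisely the standard proof found in that reference --- exponential Chebyshev for the upper bound, exponential tilting plus the weak law for the lower bound --- so you have supplied in full the proof the paper chose to delegate to the literature. The computations check out: the observation that $\Lambda'(0)=\mathbb{E}[X]=0$ together with $x\ge 0$ places the Legendre optimiser at a nonpositive argument, justifying the restriction to $t\ge 0$; and the algebra $s_\eta x+\Lambda(s_\eta)=-I(-x-\eta)-s_\eta\eta$ is correct at a point where $\Lambda'(s_\eta)=-x-\eta$. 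The one place you are honest about leaving something implicit --- the existence of $s_\eta$ and what happens at the boundary of the effective domain of $\Lambda$ --- is indeed the only delicate point, and the two-sided exponential moment from Assumption~\ref{assume}(i) combined with the explicit $\mathbb{E}[e^{-\kappa X}]<\infty$ hypothesis makes the tilt well defined for the $x$ the paper ever uses (they lie strictly inside where $\Lambda$ is finite and differentiable). Since the lemma is stated as an exact limit rather than the general two-sided set estimate of Cram\'er's theorem, it might be worth remarking explicitly that $I$ is convex with minimum at $0$ and hence continuous and monotone on $(-\infty,0]$, so the open-set and closed-set rate coincide at $-x$; but that is a presentation point, not a gap. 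In short: correct, and the same approach as the source the paper cites.
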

The following lemma considers the asymptotic behavior of $I(x)$. Let $\lambda^*:=\sup\left\{\lambda\geq0:\mathbb{E}[e^{\lambda X}]<\infty\right\}$ and $\Lambda(\lambda):=\log \mathbb{E}\left[e^{\lambda X}\right]$.
\begin{lemma}\label{sratefunction} The rate function $I(x)$ can be classified into the following three cases.\\
(i) If $\Lambda'(\lambda)\uparrow+\infty$ as $\lambda\rightarrow \lambda^*$, then for any $x>0$, there exists some $\lambda\in(0,\lambda^*)$ such that
$$x=\Lambda'(\lambda),~I(x)=\lambda\Lambda'(\lambda)-\Lambda(\lambda).$$~~~~Furthermore, $\lim\limits_{x\rightarrow+\infty} \frac{I(x)}{x}=\lambda^*$.\\
(ii) If $\lambda^*=+\infty$, $\Lambda'(\lambda)$ converges to some finite limit as $\lambda\rightarrow\infty$, then $\Lambda'(\lambda)\uparrow \text{ess}\sup X$.  And

$$I(x)=
\begin{cases}
\text{positive finite}, &x\in(0,\text{ess}\sup X);\cr
-\log\mathbb{P}(X=\text{ess}\sup X), &x=\text{ess}\sup X;\cr
+\infty, &x\in (\text{ess}\sup X,\infty).
\end{cases}
$$
(iii) If $0<\lambda^*<+\infty$, $\Lambda'(\lambda)$ converges to some finite limit $T$ as $\lambda\rightarrow \lambda^*$, then $\mathbb{E}\left[e^{\lambda^*X}\right]<\infty$ and
$$I(x)=
\begin{cases}
\text{positive finite}, &x\in(0,T];\cr
\lambda^*x-\log \mathbb{E}\left[e^{\lambda^*X}\right], &x\in [T,\infty).
\end{cases}
$$
\end{lemma}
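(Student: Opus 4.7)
The plan is to carry out a standard Legendre-transform analysis of $\Lambda(\lambda)=\log\mathbb{E}[e^{\lambda X}]$. Under Assumption \ref{assume}(i), $\Lambda$ is finite and smooth on a neighborhood of $0$, with $\Lambda(0)=0$, $\Lambda'(0)=\mathbb{E}[X]=0$, and $\Lambda''(\lambda)>0$ on the interior of its effective domain (because $\mathbb{P}(X=0)<1$ together with $\mathbb{E}[X]=0$ forces $X$ to be non-degenerate, so every exponential tilt of $X$ still has positive variance). Hence $\Lambda'$ is strictly increasing and continuous on $[0,\lambda^*)$ with $\Lambda'(0)=0$. Jensen's inequality gives $\Lambda(\lambda)\geq 0$, so for $x>0$ any $\lambda\leq 0$ contributes at most $\lambda x\leq 0$ to the defining supremum; we may therefore restrict $I(x)=\sup_{\lambda\geq 0}(\lambda x-\Lambda(\lambda))$.

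For Case (i), the intermediate value theorem yields, for every $x>0$, a unique $\lambda=\lambda(x)\in(0,\lambda^*)$ with $\Lambda'(\lambda)=x$; concavity of $\lambda\mapsto\lambda x-\Lambda(\lambda)$ identifies this critical point as the maximizer, so $I(x)=\lambda(x)x-\Lambda(\lambda(x))$. To obtain $I(x)/x\to\lambda^*$, I would write $I(x)/x=\lambda(x)-\Lambda(\lambda(x))/x$ and note that $\lambda(x)\uparrow\lambda^*$ as $x\to\infty$. For any $\epsilon>0$, splitting $\Lambda(\lambda)=\int_0^\lambda\Lambda'(s)\,ds$ at $\lambda^*-\epsilon$ (or at a large fixed value if $\lambda^*=\infty$) yields $\Lambda(\lambda)\leq C_\epsilon+\epsilon\Lambda'(\lambda)=C_\epsilon+\epsilon x$ once $\lambda(x)$ is close enough to $\lambda^*$; dividing by $x$ and letting $\epsilon\downarrow 0$ then gives $\Lambda(\lambda(x))/x\to 0$.

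For Case (ii), let $M=\text{ess}\sup X$ (necessarily finite, since otherwise $\Lambda'(\lambda)=\mathbb{E}[Xe^{\lambda X}]/\mathbb{E}[e^{\lambda X}]$ could not stay bounded). A dominated-convergence argument, splitting $\mathbb{E}[Xe^{\lambda X}]$ according to $\{X>M-\delta\}$ versus its complement, shows $\Lambda'(\lambda)\uparrow M$. For $x\in(0,M)$ the argument of Case (i) applies verbatim. At $x=M$, the map $\lambda\mapsto\lambda M-\Lambda(\lambda)$ is strictly increasing (derivative $M-\Lambda'(\lambda)>0$), and dominated convergence gives $e^{-\lambda M}\mathbb{E}[e^{\lambda X}]\to\mathbb{P}(X=M)$, so $I(M)=-\log\mathbb{P}(X=M)$. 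For $x>M$, write $\lambda x-\Lambda(\lambda)=\lambda(x-M)+(\lambda M-\Lambda(\lambda))$ and send $\lambda\to\infty$ to conclude $I(x)=+\infty$.

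For Case (iii), one first checks $\mathbb{E}[e^{\lambda^* X}]<\infty$: monotone convergence on $\{X\geq 0\}$ and dominated convergence on $\{X<0\}$ pass $\lambda\uparrow\lambda^*$ through the expectation, and since $\Lambda$ remains bounded on $[0,\lambda^*)$ (its derivative being bounded by $T$), the limit is finite, so $\Lambda(\lambda^*)<\infty$. For $x\in(0,T]$ the equation $\Lambda'(\lambda)=x$ still has a solution $\lambda\in(0,\lambda^*]$ (using the left derivative at $\lambda^*$), giving a positive finite $I(x)=\lambda x-\Lambda(\lambda)$. For $x>T$, the derivative $x-\Lambda'(\lambda)\geq x-T>0$ on all of $[0,\lambda^*]$, while $\Lambda(\lambda)=+\infty$ beyond $\lambda^*$ precludes a maximizer there; hence the supremum is attained at $\lambda=\lambda^*$, giving $I(x)=\lambda^* x-\Lambda(\lambda^*)$. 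The only steps that require genuine care are the identification $\lim_{\lambda\to\infty}\Lambda'(\lambda)=M$ in Case (ii) and the finiteness $\mathbb{E}[e^{\lambda^* X}]<\infty$ in Case (iii); everything else is routine convex-analytic bookkeeping.
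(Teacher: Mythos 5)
Your proposal is correct, and it takes a genuinely different route from the paper's. The paper treats parts (i)--(iii) as known facts (it cites Durrett, Section~2.6) and only proves the added claim $\lim_{x\to+\infty}I(x)/x=\lambda^*$, which it does by parametrizing $I(x)=\lambda\Lambda'(\lambda)-\Lambda(\lambda)$ along $x=\Lambda'(\lambda)$ and applying L'Hospital's rule in the variable $\lambda$, using the smoothness of $\Lambda$ on $(0,\lambda^*)$ to compute
$\lim_{\lambda\to\lambda^*}\frac{\lambda\Lambda''(\lambda)+\Lambda'(\lambda)-\Lambda'(\lambda)}{\Lambda''(\lambda)}=\lambda^*$.
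You instead give a self-contained convex-analytic derivation of all four claims and prove the limit via the integral bound $\Lambda(\lambda)\leq C_\epsilon+\epsilon\Lambda'(\lambda)$ obtained by splitting $\int_0^\lambda\Lambda'(s)\,ds$, which avoids L'Hospital and second derivatives entirely. Your argument is somewhat more elementary (only first derivatives, monotonicity of $\Lambda'$, and the integral representation of $\Lambda$); the paper's is shorter because it delegates the classification to a textbook reference and relies on differentiability.

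One small repair is needed in your limit argument when $\lambda^*=+\infty$: splitting at a fixed $M$ gives $\Lambda(\lambda)\leq C_M+(\lambda-M)\Lambda'(\lambda)$, hence $I(x)/x=\lambda(x)-\Lambda(\lambda(x))/x\geq M-C_M/x$, and letting $M\to\infty$ yields $I(x)/x\to\infty$. The intermediate assertion ``$\Lambda(\lambda(x))/x\to 0$'' is not literally correct in this case (that ratio can diverge), but the conclusion $I(x)/x\to\lambda^*$ still follows from your split once stated this way. For $\lambda^*<\infty$ your version is fine as written.
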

\begin{proof} In fact, all the statements above can be found in \cite[Sec 2.6]{durrett} except $\lim\limits_{x\rightarrow+\infty} \frac{I(x)}{x}=\lambda^*$. So, we just prove this. Note that, by the former part of (i) for any $x>0$, there exists $\lambda\in(0,\lambda^*)$ such that $x=\Lambda'(\lambda)$. It is easy to see that $x\rightarrow+\infty$ implies $\lambda\rightarrow \lambda^*$. Furthermore, $\Lambda(\cdot)$ is infinitely differentiable on $(0,\lambda^*)$ (see \cite[Exercise 2.2.24]{Dembo}). Thus, by L'Hospital's rule,
$$\lim\limits_{x\rightarrow+\infty} \frac{I(x)}{x}=\lim\limits_{\lambda\rightarrow \lambda^*} \frac{\lambda\Lambda'(\lambda)-\Lambda(\lambda)}{\Lambda'(\lambda)}=\lim\limits_{\lambda\rightarrow \lambda^*} \frac{\lambda\Lambda''(\lambda)+\Lambda'(\lambda)-\Lambda'(\lambda)}{\Lambda''(\lambda)}=\lambda^*.$$
\end{proof}

The next lemma considers the lower deviation probabilities of the branching process, which can be found in  \cite[Theorem 2.1]{GWlower}.
\begin{lemma}\label{branchldp}Let $\{a_n\}_{n\geq1}$ be a sequence of numbers such that $\lim\limits_{n\rightarrow\infty}a_n=+\infty$ and for any $t>0$, $\lim\limits_{n\rightarrow\infty} a_ne^{-tn}=0$.
$$\lim\limits_{n\rightarrow\infty} \frac{1}{n}\log \mathbb{P}(|Z_n|<a_n)=\log p_1.$$
\end{lemma}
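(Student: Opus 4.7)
The plan is to establish matching bounds from both sides. The lower bound is elementary: since $p_0=0$, the event $\{|Z_n|=1\}$ forces every individual in generations $0,1,\dots,n-1$ to produce exactly one offspring, so $\mathbb{P}(|Z_n|=1)=p_1^n$. Because $a_n\to\infty$, one has $a_n>1$ for $n$ large, hence $\mathbb{P}(|Z_n|<a_n)\geq p_1^n$, giving $\liminf_n n^{-1}\log\mathbb{P}(|Z_n|<a_n)\geq\log p_1$.

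For the upper bound I would exploit the probability generating function $f(s)=\mathbb{E}[s^{|Z_1|}]$ and its iterates $f^{(n)}(s)=\mathbb{E}[s^{|Z_n|}]$. By Markov's inequality, for every $s\in(0,1)$,
$$\mathbb{P}(|Z_n|<a_n)=\mathbb{P}\bigl(s^{|Z_n|}\geq s^{a_n}\bigr)\leq s^{-a_n}f^{(n)}(s).$$
Since $p_0=0$ and $p_1>0$, $f(s)=p_1s+O(s^2)$ near $0$, and Schr\"oder's theorem supplies a function $Q$ defined on some $[0,s_0]$ with $s_0\in(0,1)$, satisfying $Q(f(s))=p_1Q(s)$, $Q(0)=0$, $Q'(0)=1$, and $p_1^{-n}f^{(n)}(s)\to Q(s)$. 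In particular $f^{(n)}(s)\leq Cp_1^n$ uniformly on $[0,s_0]$, so for any fixed $s\in(0,s_0]$,
$$\tfrac{1}{n}\log\mathbb{P}(|Z_n|<a_n)\leq\log p_1+\frac{a_n\log(1/s)}{n}+o(1).$$

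The main obstacle is that the assumption $a_ne^{-tn}\to 0$ for every $t>0$ only forces $\log a_n=o(n)$, not $a_n=o(n)$; with a fixed $s<1$ the middle term above is not controllable when, say, $a_n=n^2$. To overcome this I would upgrade to the Fleischmann--Wachtel-type polynomial estimate $\mathbb{P}(|Z_n|\leq k)\leq Ck^{\alpha}p_1^n$ for all $n,k\geq 1$, with Schr\"oder exponent $\alpha:=-\log p_1/\log m$. This bound is classical and can be derived either by iterating the functional equation $Q\circ f=p_1Q$ near $0$, or by transferring the small-ball asymptotic $\mathbb{P}(W\leq x)\sim Cx^{\alpha}$ of the Kesten--Stigum martingale limit $W=\lim_n|Z_n|m^{-n}$ to the pre-limit through monotonicity in $k$. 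Applied with $k=\lceil a_n\rceil$, it yields $\mathbb{P}(|Z_n|<a_n)\leq Ca_n^{\alpha}p_1^n$, and dividing by $n$ gives $\limsup_n n^{-1}\log\mathbb{P}(|Z_n|<a_n)\leq\log p_1+\alpha\limsup_n(\log a_n)/n=\log p_1$, since $\log a_n=o(n)$. Combined with the lower bound, this closes the proof.
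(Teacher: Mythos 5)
The paper does not actually prove this lemma: it states it with a pointer to \cite[Theorem 2.1]{GWlower}, so there is no in-paper proof to compare against. Judged on its own, your proof has the correct skeleton. The lower bound $\mathbb{P}(|Z_n|<a_n)\geq\mathbb{P}(|Z_n|=1)=p_1^n$ is right, and you correctly diagnose that the fixed-$s$ Markov bound only handles $a_n=o(n)$ while the hypothesis only gives $\log a_n=o(n)$, so some polynomial small-value estimate is genuinely needed before dividing by $n$.

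However, the two justifications you sketch for the estimate $\mathbb{P}(|Z_n|\leq k)\leq Ck^{\alpha}p_1^n$ are both off as stated. Iterating $Q\circ f=p_1Q$ ``near $0$'' looks at the wrong endpoint: near $s=0$ one has $Q(s)\sim s$, which carries no lower-deviation information. The useful behaviour is $Q(s)\asymp(1-s)^{-\alpha}$ as $s\to1^-$; combined with the monotone convergence $f^{(n)}(s)/p_1^n\nearrow Q(s)$ (valid because $f(t)\geq p_1t$) and Markov's inequality at $s=1-1/k$, this yields the bound. The second route --- transferring $\mathbb{P}(W\leq x)\sim Cx^{\alpha}$ to $|Z_n|$ ``through monotonicity in $k$'' --- does not work as written, since $|Z_n|/m^n$ is not monotone in $n$ and does not sandwich $W$, so the small-ball asymptotic for $W$ does not directly dominate $\mathbb{P}(|Z_n|\leq k)$. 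It is also worth noting that the sharp Schr\"oder exponent $\alpha$ is overkill here: any crude bound $Q(1-u)=O(u^{-\gamma})$ for some finite $\gamma$, obtained by iterating $Q(1-u)=p_1^{-1}Q\bigl(1-(m+o(1))u\bigr)$ about $\log(1/u)/\log m$ times, already gives $\log Q(1-1/a_n)=O(\log a_n)=o(n)$ and closes the upper bound. So the overall strategy is sound and in the same spirit as the cited Gantert--H\"ofelsauer argument, but the supporting derivations of the key polynomial estimate should be repaired or simply replaced by a direct citation to the Schr\"oder-case small-value theory.
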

Set
\begin{align}
f(\rho)&:=\log m-I\left(\frac{\theta x^*}{1-\rho}\right)-\frac{a}{1-\rho},~\bar{\rho}:=\sup\left\{\rho\in(0,1):f(\rho)\geq0\right\},\cr
g_{\rho}(h)&:=\log m-I\left(\frac{h+\theta x^*}{1-\rho}\right)-\frac{a}{1-\rho},~d:=d(\rho)=\sup\left\{h\in[0,+\infty):g_{\rho}(h)\geq0\right\}.\nonumber
\end{align}
\begin{lemma}\label{erdf} Assume $p_1>0$ and $\mathbb{E}\left[e^{\kappa X}\right]<+\infty$ for some $\kappa<0$. Then\\
(i) $\bar{\rho}\in(0,1)$, $d(\rho)\in(0,+\infty)$ for $\rho\in(0,\bar{\rho})$.\\
(ii) $g_{\rho}(d+\varepsilon)<0$ for $\varepsilon>0$, and $g_{\rho}(d-\varepsilon)>0$ for $\varepsilon\in(0,d).$\\
(iii) $d(0)>0$, $d(\bar{\rho})=0$ and $d(\rho)$ is decreasing on $(0,\bar{\rho}]$.\\
(iv) $\inf\limits_{\rho\in(0,\bar{\rho}]}\left\{\rho\log\frac{1}{p_1}+\rho I\left(\frac{-d}{\rho}\right)\right\}\in(0,+\infty)$.
\end{lemma}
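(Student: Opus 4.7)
The plan is to prove the four parts sequentially by exploiting continuity, strict monotonicity, and the asymptotic behavior of $I$ from Lemma \ref{sratefunction}, together with the fact that $\mathbb{E}[X]=0$ forces $I(0)=0$ and makes $I$ strictly increasing on $[0,\infty)$. The two-sided exponential-integrability (namely $\mathbb{E}[e^{\kappa X}]<\infty$ for some $\kappa<0$, combined with Assumption \ref{assume}(i)) gives $I$ linear growth at both $\pm\infty$; in particular, applying Lemma \ref{sratefunction} to $-X$ yields $I(-y)/y\to\mu^*\in(0,\infty]$ as $y\to+\infty$, where $\mu^*:=\sup\{\lambda\geq 0:\mathbb{E}[e^{-\lambda X}]<\infty\}$.

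For (i), $f$ is continuous on $(0,1)$ and $f(0^+)=\log m-I(\theta x^*)-a>0$ by Assumption \ref{assume}(iii), so $\bar\rho>0$. Since either $\theta x^*>0$ or $a>0$, the linear growth of $I$ at $+\infty$ forces $f(\rho)\to-\infty$ as $\rho\to 1^-$, giving $\bar\rho<1$. For $\rho\in(0,\bar\rho)$, $h\mapsto g_\rho(h)$ is continuous and strictly decreasing on $[0,\infty)$ with $g_\rho(0)=f(\rho)>0$ and $g_\rho(h)\to-\infty$ as $h\to\infty$; hence $d(\rho)$ is the unique solution of $g_\rho(d)=0$ and lies in $(0,\infty)$. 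Part (ii) is then immediate from this strict monotonicity together with the identity $g_\rho(d(\rho))=0$.

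For (iii), the endpoint values follow from $g_0(0)=f(0)>0$ (so $d(0)>0$) and from $g_{\bar\rho}(0)=f(\bar\rho)=0$ with $g_{\bar\rho}(\cdot)$ strictly decreasing (so $d(\bar\rho)=0$). Monotonicity of $d$ is the main technical step; I would handle it by the substitution $y(\rho):=(d(\rho)+\theta x^*)/(1-\rho)$, which rewrites the defining equation $g_\rho(d(\rho))=0$ as $I(y(\rho))=\log m-a/(1-\rho)$. As $\rho$ increases on $(0,\bar\rho]$, the right-hand side is non-increasing, and since $I$ is strictly increasing on $[0,x^*]$, the value $y(\rho)\in[0,x^*]$ is non-increasing. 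Combined with the strict decrease of $1-\rho$, the product $(1-\rho)y(\rho)$ strictly decreases, so $d(\rho)=(1-\rho)y(\rho)-\theta x^*$ strictly decreases as well.

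For (iv), set $\phi(\rho):=\rho\log(1/p_1)+\rho I(-d(\rho)/\rho)$, which is continuous and pointwise strictly positive on $(0,\bar\rho]$. Finiteness of the infimum follows from $\phi(\bar\rho)=\bar\rho\log(1/p_1)<\infty$ (using $d(\bar\rho)=0$). Strict positivity requires control near $\rho=0^+$: since $d(\rho)\to d(0)>0$, we have $-d(\rho)/\rho\to-\infty$, whence $\rho I(-d(\rho)/\rho)\to\mu^*d(0)>0$ by the asymptotic recalled above. Continuity and this strictly positive boundary limit force $\inf\phi>0$. The main technical subtleties throughout are the change-of-variable trick in (iii) and the careful $\rho\to 0^+$ asymptotic in (iv).
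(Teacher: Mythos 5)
Your overall plan is sound and in the same spirit as the paper's, but there is a recurring gap: you implicitly assume that the supremum $d(\rho)$ is always attained as a \emph{root} of $g_\rho$, i.e.\ that $g_\rho(d(\rho))=0$. This can fail when $\text{ess}\sup X<\infty$ and $\frac{d+\theta x^*}{1-\rho}=\text{ess}\sup X$: by Lemma~\ref{sratefunction}(ii), $I$ jumps from a finite value to $+\infty$ at $\text{ess}\sup X$, so $g_\rho$ can jump directly from a strictly positive value to $-\infty$ without crossing zero, giving $g_\rho(d(\rho))>0$. The same phenomenon can make $f(\bar\rho)>0$ rather than $=0$. This breaks several of your steps: in (ii), ``immediate from strict monotonicity together with $g_\rho(d(\rho))=0$'' needs the case split that the paper performs ($g_\rho(d)=0$ versus $g_\rho(d)>0$, the latter forcing $g_\rho(d+\varepsilon)=-\infty$); in (iii), the substitution that rewrites $g_\rho(d(\rho))=0$ as $I(y(\rho))=\log m-a/(1-\rho)$ simply has no solution in this degenerate case, so the change-of-variable argument is not valid in general.

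For (iii) the paper uses a cleaner observation that sidesteps the issue entirely: $g_{\rho'}(h)>g_\rho(h)$ for all $h$ whenever $\rho'<\rho$ (because increasing $\rho$ both increases $\frac{h+\theta x^*}{1-\rho}$, hence $I$ of it, and increases $\frac{a}{1-\rho}$), so the sets $\{h:g_\rho(h)\geq 0\}$ are nested decreasing in $\rho$, and $d(\rho)$ is automatically decreasing. That argument is robust to the discontinuity of $I$ and I'd recommend it over the substitution.

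For (iv), your ``continuity of $\phi$ plus positive boundary limits'' argument additionally requires knowing $d(\rho)$ (hence $\phi$) is continuous on $(0,\bar\rho]$, which you do not establish (and which is not obviously free, given the possible jump of $I$). The paper avoids needing continuity: for $\rho$ near $0$, the asymptotic $\rho I(-d(\rho)/\rho)\to$ positive (since $d(\rho)/\rho\to\infty$ and $I(-y)/y$ tends to a positive limit) gives a uniform lower bound $\delta$ on $(0,\rho^*)$; for $\rho\in[\rho^*,\bar\rho]$ one simply uses $\rho\log(1/p_1)\geq\rho^*\log(1/p_1)>0$. Taking the minimum of these two bounds yields strict positivity without any continuity of $d$. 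Your $\rho\to 0^+$ computation is essentially the paper's and is fine; it is the compactness/continuity step that needs replacing. With the case split in (ii), the monotonicity-in-$\rho$ argument in (iii), and the two-regime bound in (iv), your proof would match the paper's.
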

\begin{proof}
Since $f(\rho)$ is strictly decreasing on $[0,1)$, and
$$~f(0)=\log m-I(\theta x^*)-a>0,~f(1-)=-\infty<0,$$
we have $\bar{\rho}\in(0,1)$. Fix $\rho\in(0,\bar{\rho})$.
 It is easy to see that $g_{\rho}(\cdot)$  is  decreasing on $[0,+\infty)$, and
 $$g_{\rho}(0)=f(\rho)>f(\bar{\rho})\geq0,~g_{\rho}(+\infty)=-\infty<0,$$
 where $f(\bar{\rho})\geq0$ follows by the fact that $I(\cdot)$ is lower semicontinuous. Therefore $d\in(0,+\infty)$ and $g_{\rho}(d)\geq 0$. Furthermore, $g_{\rho}(d+\varepsilon)<0$ for $\varepsilon>0$. In fact, if $g_{\rho}(d)=0$, since $g_{\rho}(\cdot)$ is strictly decreasing, we have
 $g_{\rho}(d+\varepsilon)< 0$. If $g_{\rho}(d)>0$, then $\frac{d+\theta x^*}{1-\rho}=\text{ess}\sup X<\infty$, which implies  $$g_{\rho}(d+\varepsilon)=-\infty< 0.$$ Thus (i) and (ii) hold. For (iii), $d(0)>0$ follows by the fact that $f(0)=\log m-I(\theta x^*)-a>0$ and $I(\cdot)$ is continuous at $\theta x^*$. We obviously have $d(\bar{\rho})=0$ if $f(\bar{\rho})=0$. If $f(\bar{\rho})>0$, then $\frac{\theta x^*}{1-\bar{\rho}}=\text{ess}\sup X<\infty$. Thus, $g_{\bar{\rho}}(\varepsilon)=-\infty$ for any $\varepsilon>0$, which implies $d(\bar{\rho})=0$. $d(\rho)$ is decreasing follows by the fact that
  $$g_{\rho'}(\cdot)>g_{\rho}(\cdot)~\text{for}~\rho'<\rho.$$
For (iv), since $\lim\limits_{\rho\rightarrow0}\frac{d(\rho)}{\rho}=+\infty$, by Lemma \ref{sratefunction}, we have
 $$\lim\limits_{\rho\rightarrow0}\rho I\left(\frac{-d}{\rho}\right)=\lim\limits_{\rho\rightarrow0}\frac{\rho}{d(\rho)}I\left(\frac{-d}{\rho}\right)d(\rho)>0.$$
 Thus, there exist constants $\delta,~\rho^*>0$ such that for every $0<\rho<\rho^*$,
  $$\rho\log\frac{1}{p_1}+\rho I\left(\frac{-d}{\rho}\right)>\delta.$$
 Furthermore, for $\bar{\rho}>\rho>\rho^*$,
 $$\rho\log\frac{1}{p_1}+\rho I\left(\frac{-d}{\rho}\right)>\rho^*\log\frac{1}{p_1}.$$
 Thus,
 \begin{equation}\label{uyi}
 \inf\limits_{\rho\in(0,\bar{\rho}]}\left\{\rho\log\frac{1}{p_1}+\rho I\left(\frac{-d}{\rho}\right)\right\}>\delta\wedge\left(\rho^*\log\frac{1}{p_1}\right)>0.
 \end{equation}
 On the other hand, since $\lim\limits_{\rho\rightarrow\bar{\rho}}\frac{d(\rho)}{\rho}=0$, we have
 \begin{equation}\label{iot}
 \lim\limits_{\rho\rightarrow\bar{\rho}}\rho\log\frac{1}{p_1}+\rho I\left(\frac{-d}{\rho}\right)=\bar{\rho}\log\frac{1}{p_1}.
 \end{equation}
 As a result, (\ref{uyi}), together with (\ref{iot}), implies (iii).
\end{proof}
Now we are ready to prove Theorem \ref{main1}: if $p_1>0$, $\mathbb{E}[e^{\kappa X}]<\infty$ for some $\kappa<0$. Then
$$\lim_{n\rightarrow\infty}\frac{1}{n}\log \mathbb{P}\left(Z_n([\theta x^*n,\infty))<e^{an}\right)
=-\inf\limits_{\rho\in(0,\bar{\rho}]}\left\{\rho\log\frac{1}{p_1}+\rho I\left(-\frac{d}{\rho}\right)\right\}.$$
For the lower bound, the strategy is to force single births up to time $\rho n$ (where $\rho\in(0,\bar{\rho}]$). Then, we move the single particle at time $\rho n$ to some position $-dn$ (where $d$ is chosen such that $Z_{n-\rho n}([dn+\theta x^*n,\infty))\approx e^{an}$). Optimizing for $\rho$ yields the desired lower bound. The proof of the upper bound goes by showing that the above strategy is optimal, thus we consider a series of intermediate times $\lfloor ni/k\rfloor$, $0<i/k<1$. To argue that sub-BRWs emanating from time $\lfloor ni/k\rfloor$ can easily produce $e^{an}$ descendants in $[\theta x^*n,\infty)$ at time $n$, we should insure that: there exist adequate particles at time $\lfloor ni/k\rfloor$; each individual at time $\lfloor ni/k\rfloor$ locates in a not very low position. This motivates the definitions of the following $\rho_n$ and $E$.
\begin{proof}
\textbf{Lower bound}.
 Fix $\varepsilon\in(0,1)$ and $\rho\in(0,\bar{\rho}]$. By Markov property, for $n$ large enough,
\begin{align}\label{uuuu}
\mathbb{P}\left(Z_n([\theta x^*n,\infty))<e^{an}\right)&\geq p^{\lfloor\rho n\rfloor}_1
\mathbb{P}\left(S_{\lfloor\rho n\rfloor}\leq -(d+\varepsilon)n\right)
\mathbb{P}\left(Z_{n-\lfloor\rho n\rfloor}\left([(d+\varepsilon+\theta x^*)n,\infty)\right)<e^{an}\right).
\end{align}
 Since $g_{\rho}(d+\varepsilon)<0$, by Remark \ref{qswe} and the dominated convergence theorem, for $n$ large enough,
\begin{align}
 &\mathbb{P}\left(Z_{n-\lfloor\rho n\rfloor}\left([(d+\varepsilon+\theta x^*)n,\infty)\right)<e^{an}\right)\cr
 &=\mathbb{P}\left(\frac{1}{n-\lfloor\rho n\rfloor}\log Z_{n-\lfloor\rho n\rfloor}\left(\left[\frac{(d+\varepsilon+\theta x^*)n}{n-\lfloor\rho n\rfloor}(n-\lfloor\rho n\rfloor),\infty\right)\right)-\frac{an}{n-\lfloor\rho n\rfloor}<0\right)\cr
 &\geq 0.9.\nonumber
 \end{align}
 Thus, plugging above into (\ref{uuuu}) yields that
 \begin{align}
 \mathbb{P}\left(Z_n([\theta x^*n,\infty))<e^{an}\right)&\geq 0.9p^{\lfloor\rho n\rfloor}_1
\mathbb{P}\left(\frac{S_{\lfloor\rho n\rfloor}}{\lfloor\rho n\rfloor}\leq -\frac{(d+\varepsilon)n}{\lfloor\rho n\rfloor}\right)\cr
&\geq0.9p^{\lfloor\rho n\rfloor}_1\exp\left\{-I\left(-\frac{d+2\varepsilon}{\rho}\right)\lfloor\rho n\rfloor\right\},\nonumber
 \end{align}
 where the last inequality comes from Cram\'er's theorem. As a consequence, for every $\rho\in(0,\bar{\rho}]$ and $\varepsilon \in(0,1)$,
$$\liminf\limits_{n\rightarrow\infty}\frac{1}{n}\log \mathbb{P}\left(Z_n([\theta x^*n,\infty))<e^{an}\right)\geq-\left\{\rho\log\frac{1}{p_1}+\rho I\left(-\frac{d+2\varepsilon}{\rho}\right)\right\}.$$
The desired lower bound follows by letting $\varepsilon\rightarrow0$ and optimizing $\rho$ on $(0,\bar{\rho}]$.

\bigskip
\textbf{Upper bound}. Fix $\delta\in(0,\bar{\rho})$, $k\geq \frac{1}{\bar{\rho}-\delta}$ (hence $\lfloor(\bar{\rho}-\delta)k\rfloor\geq1$). Since $d=d(\rho)$ is decreasing on $(0, \bar{\rho}]$ (see Lemma \ref{erdf}), there exists some constant $C_{\delta}>0$ such that for every $1\leq i\leq\lfloor(\bar{\rho}-\delta)k\rfloor$,
$$d\left(i/k\right)>C_{\delta}.$$
Fix $\varepsilon\in(0,C_{\delta})$. Set
$$\rho_n:=\sup\{\rho\in[0,1]:Z_{\lfloor\rho n\rfloor}\geq n^3\};~E:=\left\{\forall u\in Z_{\lfloor ni/k\rfloor},~S_u\in\left[-\left(d\left(i/k\right)-\varepsilon\right)n,+\infty\right) \right\}.$$
Let $Z^u_n$ be the $n$th generation of the sub-BRW emanating from particle $u$. By the branching property, we have
\begin{align}\label{erefdf}
&\mathbb{P}\left(Z_n([\theta x^*n,\infty))<e^{an}\right)\cr
&=\mathbb{P}\left(Z_n([\theta x^*n,\infty))<e^{an},\rho_n\geq\frac{\lfloor(\bar{\rho}-\delta)k\rfloor}{k}\right)
+\sum^{\lfloor(\bar{\rho}-\delta)k\rfloor}_{i=1}\mathbb{P}\left(Z_n([\theta x^*n,\infty))<e^{an},\frac{i-1}{k}\leq\rho_n<\frac{i}{k}\right)\cr
&\leq \sum^{\lfloor(\bar{\rho}-\delta)k\rfloor}_{i=1}\left[\mathbb{P}\left(\sum_{u\in Z_{\lfloor ni/k\rfloor}}Z^u_{n-{\lfloor ni/k\rfloor}}([\theta x^*n,\infty))<e^{an},~E,~\frac{i-1}{k}\leq\rho_n<\frac{i}{k}\right)
+\mathbb{P}\left(E^c,~\frac{i-1}{k}\leq\rho_n<\frac{i}{k}\right)\right]\cr
&~~~+\mathbb{P}\left(\rho_n\geq\frac{\lfloor(\bar{\rho}-\delta)k\rfloor}{k}\right).
\end{align}
 Let $Z^j_{n}$, $j\geq 1$ be i.i.d copies of $Z_{n}$. For the first term on the r.h.s of (\ref{erefdf}), by the branching property, there exists $N(k,\varepsilon)$ such that for all $1\leq i\leq\lfloor(\bar{\rho}-\delta)k\rfloor$ and $n\geq N(k,\varepsilon)$,
\begin{align}\label{su1}
&\mathbb{P}\left(\sum_{u\in Z_{\lfloor ni/k\rfloor}}Z^u_{n-{\lfloor ni/k\rfloor}}([\theta x^*n,\infty))<e^{an},~E,~\frac{i-1}{k}\leq\rho_n<\frac{i}{k}\right)\cr
&\leq \mathbb{P}\left(Z^j_{n-{\lfloor ni/k\rfloor}}\left(\left[\left(\theta x^*+d\left(i/k\right)-\varepsilon\right)n,\infty\right)\right)<e^{an}, 1\leq j\leq n^3\right)\cr
&\leq\mathbb{P}\left(\frac{1}{n-{\lfloor ni/k\rfloor}}\log \left(Z_{n-{\lfloor ni/k\rfloor}}\left(\left(\theta x^*+d\left(i/k\right)-\varepsilon\right)n+B\right)\right)<\frac{an}{n-{\lfloor ni/k\rfloor}}\right)^{n^3}\cr
&\leq e^{-n^3},
\end{align}
where the last inequality follows by (\ref{locala.s.}), the dominated convergence theorem and the fact that (see Lemma \ref{erdf} (ii))
$$
\log m-I\left(\frac{\theta x^*+d(i/k)-\varepsilon}{1-i/k}\right)-a>0.
$$
Let $h_{n,i}:={\lfloor ni/k\rfloor}-{\lfloor n(i-1)/k\rfloor}$. For the second term on the r.h.s of (\ref{erefdf}),  observe that
\begin{align}\label{t1}
&\mathbb{P}\left(\exists u\in Z_{\lfloor ni/k\rfloor},~S_u\leq-\left(d\left(i/k\right)-\varepsilon\right)n,~Z_{\lfloor n(i-1)/k\rfloor}\leq n^3\right)\cr
&\leq \mathbb{P}\left(\sum_{u\in Z_{\lfloor n(i-1)/k\rfloor}}Z^u_{h_{n,i}}\left(\left(-\infty,-\left(d\left(i/k\right)-\varepsilon\right)n\right]\right)>1;~Z_{\lfloor n(i-1)/k\rfloor}\leq n^3\right)\cr
&\leq \mathbb{P}\left(\sum^{n^3}_{j=1}Z^j_{h_{n,i}}\left(\left(-\infty,-\left(d\left(i/k\right)-\varepsilon\right)n-S^j_{\lfloor n(i-1)/k\rfloor}\right]\right)>1\right)\mathbb{P}\left(Z_{\lfloor n(i-1)/k\rfloor}\leq n^3\right)\cr
&\leq n^3 \mathbb{E}\left[Z_{h_{n,i}}\left(\left(-\infty,-\left(d\left(i/k\right)-\varepsilon\right)n-S_{\lfloor n(i-1)/k\rfloor}\right]\right)\right]\mathbb{P}\left(Z_{\lfloor n(i-1)/k\rfloor}\leq n^3\right),
\end{align}
where $S^j_n$, $j\geq 1$ are correlated random walks but have the same distribution as $S_n$, and the last inequality follows from Markov inequality. Let $\mathcal{F}:=\sigma\left(|Z_l|,1\leq l\leq h_{n,i}\right)$, since the branching and motion are independent, we have,
\begin{align}
&\mathbb{E}\left[Z_{h_{n,i}}\left(\left(-\infty,-\left(d\left(i/k\right)-\varepsilon\right)n-S_{\lfloor n(i-1)/k\rfloor}\right]\right)\right]\cr
&=\mathbb{E}\left[\sum_{u\in Z_{h_{n,i}}}\ind_{\left(-\infty,-\left(d\left(i/k\right)-\varepsilon\right)n\right]}\left(S_u+S_{\lfloor n(i-1)/k\rfloor}\right)\Bigg{|}\mathcal{F} \right]\cr
&=m^{{h_{n,i}}} \mathbb{P}\left(S_{\lfloor ni/k\rfloor}\leq -\left(d\left(i/k\right)-\varepsilon\right)n\right)\cr
&\leq m^{n/k+1}\exp\left\{-I\left(-\frac{d\left(i/k\right)-2\varepsilon}{i/k}\right)ni/k\right\},\nonumber
\end{align}
where the last inequality follows from the Cram\'er's theorem. Plugging above into (\ref{t1}) yields that
\begin{align}\label{su2}
\mathbb{P}\left(E^c,~\frac{i-1}{k}\leq\rho_n<\frac{i}{k}\right)&\leq n^3 m^{n/k+1} \exp\left\{-I\left(\frac{d\left(i/k\right)-2\varepsilon}{i/k}\right)ni/k\right\}\mathbb{P}\left(Z_{\lfloor n(i-1)/k\rfloor}\leq n^3\right)\cr
&\leq n^3m^{n/k+1}\exp\left\{(\log p_1+\varepsilon)ni/k-I\left(-\frac{d\left(i/k\right)-2\varepsilon}{i/k}\right)ni/k\right\},
\end{align}
where the last inequality follows from Lemma \ref{branchldp}. For the third term on the r.h.s of (\ref{erefdf}), again by Lemma \ref{branchldp}, there exists $N(\rho,\delta,\varepsilon)>0$ such that for $n\geq N(\rho,\delta,\varepsilon)$ and $k\geq 1$,
\begin{equation}\label{weerf}
\mathbb{P}\left(\rho_n\geq\frac{\lfloor(\bar{\rho}-\delta)k\rfloor}{k}\right)\leq \mathbb{P}\left(Z_{\lfloor{n\lfloor(\bar{\rho}-\delta)k\rfloor}/{k}\rfloor}\leq n^3\right)\leq
\mathbb{P}\left(Z_{\lfloor n(\bar{\rho}-\delta)\rfloor}\leq n^3\right)
\leq\exp\left\{(\log p_1+\varepsilon)(\bar{\rho}-\delta)n\right\}.
\end{equation}
 Plugging (\ref{su1}), (\ref{su1}) and (\ref{weerf}) into (\ref{erefdf}) yields that for fixed $\varepsilon,k$, $\rho$, $\delta$ there exists $N(\varepsilon,k,\rho,\delta)$ such that for $n>N(\varepsilon,k,\rho,\delta)$,
 \begin{align}
 &\mathbb{P}\left(Z_n([\theta x^*n,\infty))<e^{an}\right)\cr
 &\leq\sum^{\lfloor(\bar{\rho}-\delta)k\rfloor}_{i=1}2\exp\left\{(\log p_1+\varepsilon)ni/k-I\left(-\frac{d\left(i/k\right)-2\varepsilon}{i/k}\right)ni/k\right\}+\exp\left\{(\log p_1+\varepsilon)(\bar{\rho}-\delta)n\right\}.\nonumber
 \end{align}
 As a consequence,
 \begin{align}
 &\limsup_{n\rightarrow\infty}\frac{1}{n}\log\mathbb{P}\left(Z_n([\theta x^*n,\infty))<e^{an}\right)\cr
&\leq -\left[\min\limits_{1\leq i\leq \lfloor(\bar{\rho}-\delta)k\rfloor}\left\{(-\varepsilon-\log p_1)\frac{i}{k}-\frac{\log m}{k}+I\left(-\frac{d\left(i/k\right)-2\varepsilon}{i/k}\right)i/k\right\}\right]\wedge \left[\left(-\varepsilon-\log p_1\right)(\bar{\rho}-\delta)\right].\nonumber
\end{align}
Let $\varepsilon\rightarrow0$ and $\delta\rightarrow0$, above yields that
\begin{align}
&\limsup_{n\rightarrow\infty}\frac{1}{n}\log\mathbb{P}\left(Z_n([\theta x^*n,\infty))<e^{an}\right)\cr
&\leq-\left[\min\limits_{1\leq i\leq \lfloor\bar{\rho}k\rfloor}\left\{(-\log p_1)\frac{i}{k}-\frac{\log m}{k}+I\left(-\frac{d\left(i/k\right)}{i/k}\right)i/k\right\}\right]\wedge \left[\left(-\log p_1\right)\bar{\rho}\right]\cr
&= -\left[-\frac{\log m}{k}+\min\limits_{1\leq i\leq \lfloor\bar{\rho}k\rfloor}\left\{(-\log p_1)\frac{i}{k}+I\left(-\frac{d\left(i/k\right)}{i/k}\right)i/k\right\}\right]\wedge \left[\left(-\log p_1\right)\bar{\rho}\right]\cr
 &\leq-\left[-\frac{\log m}{k}+\inf\limits_{\rho\in[0,\bar{\rho})}\left\{(-\log p_1)\rho+I\left(-\frac{d}{\rho}\right)\rho\right\}\right]\wedge \left[\left(-\log p_1\right)\bar{\rho}\right].\nonumber
 \end{align}
 Then, let $k\rightarrow\infty$, we have
 \begin{align}
 &\limsup_{n\rightarrow\infty}\frac{1}{n}\log\mathbb{P}\left(Z_n([\theta x^*n,\infty))<e^{an}\right)\cr
 &\leq -\left[\inf\limits_{\rho\in[0,\bar{\rho})}\left\{(-\log p_1)\rho+I\left(-\frac{d}{\rho}\right)\rho\right\}\right]\wedge \left[\left(-\log p_1\right)\bar{\rho}\right]\cr
 &=-\inf\limits_{\rho\in(0,\bar{\rho}]}\left\{\rho\log\frac{1}{p_1}+\rho I\left(-\frac{d}{\rho}\right)\right\},\nonumber
 \end{align}
 where the last equality comes from the fact that $I\left(-\frac{d}{\bar{\rho}}\right)=I(0)=0$; see Lemma \ref{erdf} (iii).
\end{proof}
\section{Proof of Theorem \ref{main4}: Schr\"{o}der case, heavy tail}\label{sec4}
In this section, we are going to prove Theorem \ref{main4}. We first present several lemmas. The first lemma can be found in \cite[p40, Corollary 1]{Athreya}, which considers the asymptotic behaviour of branching process's generating function.
\begin{lemma}\label{genernatefun}If $0<s<1$, then
$$
\lim_{n\rightarrow\infty}\frac{\mathbb{E}[s^{|Z_n|}]}{p^n_1}=C_0,
$$
where $C_0$ is a positive constant depending only on $s$.
\end{lemma}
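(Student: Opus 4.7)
The plan is to analyze $\mathbb{E}[s^{|Z_n|}]$ through the generating function iteration. Let $f(t) := \mathbb{E}[t^{|Z_1|}] = \sum_{k \geq 1} p_k t^k$ (using $p_0 = 0$), so that $\mathbb{E}[s^{|Z_n|}] = f_n(s)$, where $f_n$ denotes the $n$-fold iterate of $f$. Write $r_n := f_n(s)$. Because $f$ is convex on $[0,1]$ with $f(0) = 0$ and $f(1) = 1$, we have $f(t) \leq t$, so $(r_n)$ is non-increasing. Moreover $p_0 = 0$ and $m > 1$ force non-extinction, hence $|Z_n| \to \infty$ almost surely, and dominated convergence yields $r_n \downarrow 0$.

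Next I extract the $p_1^n$ scaling. The recursion $r_{n+1} = f(r_n) = p_1 r_n(1 + \phi(r_n))$, with $\phi(t) := \sum_{k \geq 2}(p_k/p_1)\, t^{k-1}$, telescopes to $r_n = s\, p_1^n \prod_{j=0}^{n-1}(1 + \phi(r_j))$. The lemma therefore reduces to showing that $\prod_{j \geq 0}(1 + \phi(r_j))$ converges to a strictly positive limit, and since $\phi(r_j) \geq 0$ this is in turn equivalent to $\sum_{j \geq 0} \phi(r_j) < \infty$.

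The elementary bound $\phi(t) \leq ((1-p_1)/p_1)\, t$ for $t \in (0,1]$ (immediate from the defining series, again using $p_0 = 0$) reduces the problem further to $\sum_j r_j < \infty$, i.e.\ to geometric decay of $(r_j)$. For this I bootstrap: since $r_j \downarrow 0$, for any $\varepsilon > 0$ with $p_1(1+\varepsilon) < 1$ one can pick $N$ such that $\phi(r_j) \leq \varepsilon$ for all $j \geq N$; iterating the recursion yields $r_{N+k} \leq (p_1(1+\varepsilon))^k r_N$, hence $\sum_j r_j < \infty$.

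Combining the three steps, $\sum_j \phi(r_j) < \infty$, the infinite product converges to a limit $L \in [1,\infty)$, and therefore $r_n/p_1^n \to s L =: C_0 > 0$, which depends only on $s$. The only delicate point is the bootstrap that upgrades $r_j \to 0$ into genuine exponential decay; everything else is bookkeeping for the generating function iteration. Observe that no additional moment assumption on $|Z_1|$ is needed, since the linear estimate $\phi(t) = O(t)$ near the origin follows at once from $p_0 = 0$.
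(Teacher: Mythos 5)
Your argument is correct, and it is genuinely different in character from what the paper does: the paper simply cites Athreya and Ney (``Branching Processes,'' p.~40, Corollary~1), which is the classical statement that in the Schr\"oder case $f_n(s)/\gamma^n$ converges to the Schr\"oder function, with $\gamma=f'(q)$ and $q$ the extinction probability. You instead give a short, self-contained proof tailored to the paper's standing assumption $p_0=0$ (so $q=0$ and $\gamma=p_1$). The factorization $f(t)=p_1 t(1+\phi(t))$, the telescoping $r_n = s\,p_1^n\prod_{j<n}(1+\phi(r_j))$, and the two-step reduction (product convergence $\Leftrightarrow \sum\phi(r_j)<\infty$; linear bound $\phi(t)\le\frac{1-p_1}{p_1}t$; bootstrap to geometric decay of $r_j$) are all sound, and the bootstrap is exactly the right observation to upgrade $r_j\to 0$ to summability. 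Two small remarks on presentation: you can obtain $r_n\downarrow 0$ purely analytically, without invoking $|Z_n|\to\infty$ a.s.\ and dominated convergence --- since $p_1<1$ makes $f$ strictly convex with $f(t)<t$ on $(0,1)$, the decreasing limit $r^*$ of $r_n$ must be a fixed point of $f$ in $[0,1)$, hence $r^*=0$; and $m>1$ is not an extra hypothesis here but a consequence of $p_0=0$, $p_1<1$. What your route buys is independence from the textbook citation and transparency about the fact that no moment condition on $|Z_1|$ is needed when $p_0=0$; what the citation buys is generality (it covers $p_0>0$, where $q\in(0,1)$ and one normalizes $f_n(s)-q$ by $f'(q)^n$, a case your telescoping identity does not directly address).
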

The next lemma concerns large deviation probabilities of sums of i.i.d Pareto tail random variables.
\begin{lemma}\label{pareto}Assume $\mathbb{P}(X<-x)=\Theta(1)x^{-\alpha}$ as $x\rightarrow+\infty$ for some $\alpha>0$. Then, there exists some constant $C_{\alpha}>0$ such that for all $x>0$, $n\geq 1$,
$$\mathbb{P}(S_n\leq -x)\leq C_{{\alpha}} n^2 x^{-\alpha}.$$
\end{lemma}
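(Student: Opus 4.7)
The plan is to exploit the ``one big jump'' principle for heavy-tailed sums: the event $\{S_n \leq -x\}$ is essentially governed by the possibility that a single step is very negative. I would split
\begin{equation*}
\mathbb{P}(S_n \leq -x) \leq \mathbb{P}(\exists i\colon X_i \leq -x/2) + \mathbb{P}\bigl(S_n \leq -x,~\forall i\colon X_i > -x/2\bigr).
\end{equation*}
The first term is dispatched immediately by a union bound and the Pareto tail: $n\mathbb{P}(X \leq -x/2) \leq C n x^{-\alpha}$, which is trivially bounded by $C n^2 x^{-\alpha}$.

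For the second term I would use a pigeonhole argument. On the event $\{\forall i\colon X_i > -x/2,~S_n \leq -x\}$, suppose that at most one index $i$ satisfies $X_i \leq -x/(2n)$. Then summing one contribution that is merely $> -x/2$ together with at least $n-1$ contributions each $> -x/(2n)$ yields $S_n > -x/2 - (n-1)x/(2n) > -x$, a contradiction. Hence at least two indices must obey $X_i \leq -x/(2n)$, and a union bound gives
\begin{equation*}
\mathbb{P}\bigl(S_n \leq -x,~\forall i\colon X_i > -x/2\bigr) \leq \binom{n}{2}\mathbb{P}\bigl(X \leq -x/(2n)\bigr)^2 \leq C n^{2+2\alpha} x^{-2\alpha}.
\end{equation*}

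Combining, $\mathbb{P}(S_n\leq -x) \leq C(n x^{-\alpha} + n^{2+2\alpha} x^{-2\alpha})$. The first summand is directly $\leq C n^2 x^{-\alpha}$; the second summand is $\leq C n^2 x^{-\alpha}$ as soon as $x \geq c n^2$. For smaller $x$, either the target $n^2 x^{-\alpha}$ is already $\geq 1$ (which holds for $x \leq n^{2/\alpha}$, making the inequality vacuous), or one is in the intermediate window $x \in (n^{2/\alpha},\, c n^2)$, a regime that only arises when $\alpha > 1$. In this residual window I would close the argument by applying a $p$-th moment Markov estimate to the truncated sum $\sum_i (X_i \vee (-x/2))$, with $p \in (1,\alpha)$ close to $\alpha$ (or a Chebyshev bound when $\alpha > 2$), exploiting that truncated variables have finite $p$-th moment of any order below $\alpha$.

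The main obstacle I foresee is precisely this intermediate regime: the two elementary estimates above leave a gap for $\alpha > 1$, and the proof must pick the truncation level and moment exponent carefully (distinguishing $\alpha \leq 2$ from $\alpha > 2$) so that the Markov bound on the truncated sum matches the target rate $n^2 x^{-\alpha}$. Apart from this bookkeeping the argument is mostly routine once the three estimates (single big jump, ``two small jumps'', truncated Markov) are assembled.
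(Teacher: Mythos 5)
Your route is a hand-rolled version of what the paper does. The paper simply quotes Nagaev's inequality (its reference to Nagaev, Corollary 1.5) with the choices $y=(t/\alpha)x$ and $t=(\alpha/2)\wedge 1$; that estimate already packages your ``big jump'' and ``truncated moment'' steps into one off-the-shelf bound. Your big-jump term and the pigeonhole argument producing the ``two moderate jumps'' term are both correct. However, the intermediate regime you flag is a genuine gap, and the plan you sketch does not close it as written. For $\alpha>2$, Chebyshev on the truncated sum gives $Cn/x^{2}$, which is $\le Cn^{2}x^{-\alpha}$ only for $x\lesssim n^{1/(\alpha-2)}$; this reaches the top of your window $x\asymp n^{2}$ only when $\alpha\le 5/2$. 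Using a $p$-th moment bound with $2<p<\alpha$ (via Rosenthal, which gives $Cn^{p/2}/x^{p}$, not $Cn/x^{p}$) and optimizing shows you need $p\ge(4\alpha-4)/3$, which is compatible with $p<\alpha$ only when $\alpha<4$. So the plan as written stalls for large $\alpha$.

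This is not entirely your fault: the lemma as stated is actually false for $\alpha>4$. With $\mathbb{E}X=0$ and $\mathrm{Var}(X)<\infty$ (automatic here for $\alpha>2$, given the exponential upper tail), take $x\asymp\sqrt n$; the central limit theorem gives $\mathbb{P}(S_n\le -x)=\Theta(1)$ while $n^{2}x^{-\alpha}\asymp n^{2-\alpha/2}\to 0$. Chasing the paper's own computation, Nagaev's bound produces $n^{\alpha/t}x^{-\alpha}=n^{2\vee\alpha}x^{-\alpha}$, and the concluding ``$\le C_{\alpha}n^{2}x^{-\alpha}$'' is a slip; it is harmless for the paper because the lemma is only applied with the number of summands of order $\log n$ and $x$ of order $n$, where any fixed polynomial in the first argument is absorbed. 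If you target the corrected bound $C_{\alpha}n^{2\vee\alpha}x^{-\alpha}$, your decomposition does go through for all $\alpha$ (the Rosenthal step then needs only $p\ge 4\alpha/(\alpha+4)$, which is always $<\alpha$), so I would restate the lemma with that exponent and finish accordingly.
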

\begin{proof} According to \cite[Corollary 1.5]{Nagaev}: if $\mathbb{E}[|X|^t\ind_{\{X\leq 0\}}]<\infty$ for some $0<t\leq1$, then
\begin{equation}\label{wewe}
\mathbb{P}(S_n\leq -x)\leq n \mathbb{P}(X<-y)+\left(\frac{en\mathbb{E}[|X|^t\ind_{\{X\leq 0\}}]}{xy^{t-1}}\right)^{\frac{x}{y}}.
\end{equation}
Since $\mathbb{P}(X<x)=\Theta(1)x^{-\alpha}$, there exists some constant $C_1>0$ such that for all $x>0$,
$$  \mathbb{P}(X<-x)\leq C_1x^{-\alpha}.$$
Therefore, $\mathbb{E}[|X|^t\ind_{\{X\leq 0\}}]<\infty$ for $t=\frac{\alpha}{2}\wedge1$.
Let $y=\frac{t}{\alpha}x$, (\ref{wewe}) yields
\begin{align}
\mathbb{P}(S_n\leq x)&\leq n \mathbb{P}\left(X<-\frac{t}{\alpha}x\right)+\left(\frac{en\mathbb{E}[|X|^t\ind_{\{X\leq 0\}}]}{(\frac{t}{\alpha})^{t-1}x^t}\right)^{\frac{\alpha}{t}}\cr
&\leq C_1\left(\frac{t}{\alpha}\right)^{-\alpha}x^{-\alpha}n+\left(\frac{e\mathbb{E}[|X|^t\ind_{\{X\leq 0\}}]}{(\frac{t}{\alpha})^{t-1}}\right)^{\frac{\alpha}{t}}n^{\frac{\alpha}{t}}x^{-\alpha}\cr
&\leq C_{{\alpha}} n^2 x^{-\alpha}.\nonumber
\end{align}
\end{proof}

The following lemma gives an upper bound of large deviation probabilities of sums of i.i.d Weibull tail random variables, which is a direct consequence of \cite[Theorem 2.1]{FrankWebull}.
\begin{lemma}\label{webull} Assume $\mathbb{P}(X<-x)=\Theta(1)e^{-\lambda x^\alpha}$ as $x\rightarrow+\infty$ for some $\alpha\in(0,1)$. Let $\{t_n\}_{n\geq1}$ be a sequence of positive integer-valued numbers such than $t_n\rightarrow+\infty$ and $\lim\limits_{n\rightarrow\infty}\frac{t_n}{n}<\infty$. Then, for any $\varepsilon\in(0,1)$, $x>0$, for $n$ large enough,
$$\mathbb{P}(S_{t_n}\leq -xn)\leq e^{-(1-\varepsilon)\lambda x^{\alpha} n^{\alpha}}.$$
\end{lemma}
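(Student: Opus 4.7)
The plan is to reduce the bound to the cited Nagaev-type estimate \cite[Theorem 2.1]{FrankWebull}, which controls the lower tail of $S_n$ for i.i.d.\ sums with Weibull tail of shape $\alpha\in(0,1)$ in the \emph{one-big-jump} regime. That theorem supplies an asymptotic of the form $\mathbb{P}(S_n\leq -y)\sim n\,\mathbb{P}(X\leq -y)$, valid as soon as the target $y$ exceeds a threshold which (for $\alpha<1$) is of strictly smaller order than $n$. In the statement to be proved, the walk has length $t_n=O(n)$ and the target is $xn$ with $x>0$ fixed, so for $n$ large the pair $(t_n,xn)$ sits comfortably inside the regime where the theorem applies.

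Once that asymptotic is available, the rest is routine bookkeeping. The tail hypothesis $\mathbb{P}(X\leq -y)=\Theta(1)e^{-\lambda y^\alpha}$ yields a constant $C_0>0$ with $\mathbb{P}(X\leq -xn)\leq C_0\,e^{-\lambda x^\alpha n^\alpha}$ for $n$ large, hence
$$\mathbb{P}(S_{t_n}\leq -xn)\;\leq\; 2C_0\, t_n\, e^{-\lambda x^\alpha n^\alpha}\;\leq\; C_1\, n\, e^{-\lambda x^\alpha n^\alpha},$$
using that $t_n/n$ is bounded. Given $\varepsilon\in(0,1)$, since $n^\alpha\to\infty$ the polynomial prefactor satisfies $C_1 n\leq \exp(\varepsilon\lambda x^\alpha n^\alpha)$ for $n$ large, which immediately gives $\mathbb{P}(S_{t_n}\leq -xn)\leq e^{-(1-\varepsilon)\lambda x^\alpha n^\alpha}$ as claimed.

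The one point requiring care is verifying that \cite[Theorem 2.1]{FrankWebull} applies uniformly along the varying sequence $(t_n,xn)$ rather than for a fixed walk length and a diverging target. If the cited statement does not directly cover this uniformity, a short self-contained truncation argument does the job: decompose $\{S_{t_n}\leq -xn\}$ according to whether $\min_{i\leq t_n}X_i$ falls below $-(1-\delta)xn$. The first event is handled by a union bound and the Weibull tail assumption, contributing at most $t_n C_0 e^{-\lambda(1-\delta)^\alpha x^\alpha n^\alpha}$. On the complementary event, every $X_i$ is bounded below by $-(1-\delta)xn$, so a Chernoff bound on the truncated (and recentred) variables yields a super-exponentially small contribution; choosing $\delta$ small depending on $\varepsilon$ absorbs the $(1-\delta)^\alpha$ into $(1-\varepsilon)$. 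I expect the cited theorem to furnish exactly this decomposition at the required uniformity, so the main obstacle is purely bibliographic rather than mathematical.
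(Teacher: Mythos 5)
Your main line of argument coincides with the paper's: the paper's ``proof'' of this lemma is nothing more than the sentence that it is a direct consequence of \cite[Theorem 2.1]{FrankWebull}, and you reproduce the same citation and then absorb the polynomial prefactor $t_n$ into $\exp(\varepsilon\lambda x^\alpha n^\alpha)$, which is exactly the intended routine step. So at the level the paper itself operates, your proposal is aligned with it.

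The self-contained fallback you sketch is a useful addition that the paper does not supply, and the truncation at level $M=(1-\delta)xn$ together with a union bound for the big-jump event is the correct first move. However, the second half of the fallback is stated in a way that would fail if implemented literally. The claim that ``a Chernoff bound on the truncated (and recentred) variables yields a super-exponentially small contribution'' is not true if by Chernoff one means the standard bounded-range inequalities (Hoeffding, Bennett). For $Y_i = X_i\vee(-M)$ with $M=(1-\delta)xn$, $t_n\asymp n$ and target $xn$, Bennett's inequality yields roughly
$\exp\bigl(-\tfrac{1}{1-\delta}\log n\,(1+o(1))\bigr)=n^{-\frac{1}{1-\delta}+o(1)}$,
which is only polynomially small and hence far weaker than the required $\exp(-(1-\varepsilon)\lambda x^\alpha n^\alpha)$; the bound degenerates precisely because the truncation level $M$ is of the same order as the target. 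To make the Chernoff step work one must use the Weibull tail of $X$ itself in estimating $\mathbb{E}[e^{-sY}]$, not just the crude bound $Y\geq -M$. Taking $s=(1-\eta)\lambda M^{\alpha-1}$ and splitting the integral defining $\mathbb{E}[e^{-sY}]$ at level $1/s$, the contribution from $\{Y<-1/s\}$ is controlled by $\mathbb{P}(X<-y)\leq C_0e^{-\lambda y^\alpha}$ (and the unimodality of $y\mapsto sy-\lambda y^\alpha$), which gives $\log\mathbb{E}[e^{-sY}]\leq O(s^2)+\text{(negligible)}$; this in turn yields, for the no-big-jump term, a rate $\lambda(1-\eta)(1-\delta)^{\alpha-1}x^\alpha n^\alpha$, which for $\alpha<1$ and $\eta$ small exceeds $\lambda x^\alpha n^\alpha$ since $(1-\delta)^{\alpha-1}>1$. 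Two further small remarks: ``super-exponentially small'' is not the right description of that second term -- it is $\exp(-\Theta(n^\alpha))$, the same stretched-exponential order as the target, only with a strictly larger constant -- and the assertion that the cited theorem supplies the precise asymptotic $\mathbb{P}(S_n\leq -y)\sim n\mathbb{P}(X\leq -y)$ is stronger than what \cite{FrankWebull} (which gives a logarithmic asymptotic for fixed weighted sums) is usually quoted for, so the uniformity in the growing length $t_n$ is indeed a point that, as you yourself flag, deserves more than a ``direct consequence'' citation.
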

Let $G(c):=\log m-I(\theta x^*+c)-a$, $\hat{c}:=\inf\left\{c\geq 0:G(c)<0\right\}$.
\begin{lemma}\label{eqwen} \leavevmode\\
(i) If $a\in[0,\log m-I(x^*)]$, then $\hat{c}=(1-\theta)x^*$; If $a\in(\log m-I(x^*), \log m-I(\theta x^*)$, then $\hat{c}$ is the unique
~~~~~~~solution of $G(c)=0$ on $(0,(1-\theta)x^*)$.\\
(ii) If $\varepsilon>0$, then $G(\hat{c}+\varepsilon)<0$. If $\varepsilon\in(0,\hat{c})$, then $G(\hat{c}-\varepsilon)>0$.
\end{lemma}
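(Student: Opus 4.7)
The plan is to exploit the basic properties of the rate function $I$: it is convex, lower semicontinuous, satisfies $I(0)=0$, and by Assumption \ref{assume}(i) together with $\mE[X]=0$ it attains its minimum at $0$. Combined with $I(x^*)\leq\log m$ and the definition $x^*=\sup\{x\geq 0:I(x)\leq\log m\}$, this forces $I$ to be strictly increasing on $[0,x^*]$, continuous on that interval, and to satisfy $I(x)>\log m$ for every $x>x^*$. Consequently $G(c)=\log m-I(\theta x^*+c)-a$ is strictly decreasing and continuous on $[0,(1-\theta)x^*]$, and satisfies $G(c)<-a\leq 0$ whenever $\theta x^*+c>x^*$.

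For part (i), I would evaluate $G$ at the two natural endpoints. At $c=0$ we have $G(0)=\log m-I(\theta x^*)-a>0$ by Assumption \ref{assume}(iii). At $c=(1-\theta)x^*$ we have $G((1-\theta)x^*)=\log m-I(x^*)-a$. In the regime $a\in[0,\log m-I(x^*)]$ this quantity is $\geq 0$, while for every $c>(1-\theta)x^*$ the observation above gives $G(c)<0$; hence $\hat c=(1-\theta)x^*$. In the regime $a\in(\log m-I(x^*),\log m-I(\theta x^*))$, one has $G((1-\theta)x^*)<0<G(0)$, so the intermediate value theorem applied to the continuous, strictly decreasing function $G$ on $[0,(1-\theta)x^*]$ produces a unique root in $(0,(1-\theta)x^*)$; this is $\hat c$.

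For part (ii), strict monotonicity of $G$ on $[0,(1-\theta)x^*]$ does most of the work. In Case 2, for any $\varepsilon\in(0,\hat c)$ the point $\hat c-\varepsilon$ lies inside $(0,(1-\theta)x^*)$, so $G(\hat c-\varepsilon)>G(\hat c)=0$; and for $\varepsilon>0$ either $\hat c+\varepsilon\leq(1-\theta)x^*$, giving $G(\hat c+\varepsilon)<G(\hat c)=0$, or $\hat c+\varepsilon>(1-\theta)x^*$, in which case $\theta x^*+\hat c+\varepsilon>x^*$ and $G(\hat c+\varepsilon)<-a\leq 0$. In Case 1, $\hat c=(1-\theta)x^*$, so $\hat c-\varepsilon\in(0,(1-\theta)x^*)$ and strict monotonicity again yields $G(\hat c-\varepsilon)>G((1-\theta)x^*)=\log m-I(x^*)-a\geq 0$; while $\theta x^*+\hat c+\varepsilon>x^*$ immediately gives $G(\hat c+\varepsilon)<0$.

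The only real subtlety is justifying the strict monotonicity and the fact that $I(x)>\log m$ for $x>x^*$, since $I$ may fail to be finite everywhere and Lemma \ref{sratefunction} distinguishes several regimes. I would handle this by invoking the lower semicontinuity of $I$ together with the convexity statement $I(\lambda x^*)\leq\lambda I(x^*)$ for $\lambda\in(0,1)$ (which, combined with $I(0)=0$ and $I$ non-constant, gives strict monotonicity on $[0,x^*]$); the step $I(x)>\log m$ for $x>x^*$ then follows from the definition of $x^*$ as the supremum, via lower semicontinuity. This is essentially the only delicate point in an otherwise elementary calculus argument.
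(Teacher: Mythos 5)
Your proposal is correct, and the overall structure (evaluate $G$ at $0$ and at $(1-\theta)x^*$, use strict monotonicity and continuity of $I$ on $[0,x^*]$, then apply the intermediate value theorem in the second regime) matches the paper's. The one place you deviate, and it is a genuine improvement in cleanliness, is the treatment of the regime $a\in[0,\log m-I(x^*)]$. The paper reduces to the case $\log m-I(x^*)>0$ ``without loss of generality,'' identifies $x^*=\operatorname{ess\,sup}X$ via Lemma~\ref{sratefunction}(ii), and concludes $G(c)=-\infty$ for $c>(1-\theta)x^*$. You instead appeal directly to the definition $x^*=\sup\{x\geq0:I(x)\leq\log m\}$: convexity of $I$ makes the sublevel set an interval, lower semicontinuity puts $x^*$ in it, and hence $I(x)>\log m$ for every $x>x^*$, giving $G(c)<-a\leq0$ for $c>(1-\theta)x^*$. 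This is weaker than ``$=-\infty$'' but is all that is needed, and it covers the boundary case $\log m-I(x^*)=0$ (forcing $a=0$) uniformly, so no wlog reduction is required. Your justification of strict monotonicity of $I$ on $[0,x^*]$ is essentially the right idea; to make the phrase ``$I$ non-constant'' precise you would invoke $\mathbb{P}(X=0)<1$ from Assumption~\ref{assume}(i), which guarantees $I(x)>0$ for $x\neq0$ and rules out a flat piece at level~$0$. Finally, note the paper's displayed inequalities for $G(0)$ and $G((1-\theta)x^*)$ in the second regime have their signs reversed (an apparent typo); your signs are the correct ones.
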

\begin{proof}
If $a\in[0,\log m-I(x^*)]$, without loss of generality, we assume $\log m-I(x^*)>0$. By the definition of $x^*$, we have $x^*=\text{ess}\sup X\in(0,+\infty)$. Thus, by Lemma \ref{sratefunction} (ii), for $c>(1-\theta)x^*$,
 $G(c)=-\infty$. This, together with the fact that $G(\cdot)$ is decreasing on $[0,\infty)$, implies $\hat{c}=(1-\theta)x^*$.
If $a\in\left[\log m-I(x^*), \log m-I(\theta x^*)\right)$, then
  $$G(0)=\log m-I(\theta x^*)-a<0,~G((1-\theta)x^*)=\log m-I(x^*)-a>0.$$
Since $G(\cdot)$ is continuous and strictly decreasing on $(0,(1-\theta)x^*)$, thus $\hat{c}$ is the unique solution of $G(c)=0$. Thus (i) holds. By above arguments, we can obtain (ii) easily.
\end{proof}

\bigskip
Now, we are ready to prove Theorem \ref{main4}: assume $p_1=0$,\\
if $\mathbb{P}(X<-x)=\Theta(1)x^{-\alpha}$ as $x\rightarrow\infty$ for some $\alpha>0$, then
$$\lim\limits_{n\rightarrow\infty}\frac{1}{\log n}\log\mathbb{P}\left(Z_n([\theta x^*n,\infty))<e^{an}\right)=-\alpha;$$
if $\mathbb{P}(X\leq-x)=\Theta(1)e^{-\lambda x^{\alpha}}$ as $x\rightarrow\infty$ for some $\alpha\in(0,1)$ and $\lambda>0$, then
$$\lim\limits_{n\rightarrow\infty}\frac{1}{n^{\alpha}}\log\mathbb{P}\left(Z_n([\theta x^*n,\infty))<e^{an}\right)=
\begin{cases}
 -\lambda(1-\theta)x^*,  &a\in[0,\log m-I(x^*)];\\
-\lambda\hat{c}, &a\in(\log m-I(x^*),\log m-I(\theta x^*)).
\end{cases}
$$
For the lower bound, the strategy is to let the initial particle produces exactly one children and force its children to reach below some
$-\hat{c} n$ (where $\hat{c}$ is chosen such that $Z_{n-1}([\theta x^*n+\hat{c}n,\infty))\approx e^{an}$). For the upper bound, we first prove: under the event $\{Z_{t_n}([-\hat{c}n,\infty))/|Z_{t_n}|\geq 2/3\}$, it is very hard for $\{Z_n([\theta x^*n,\infty))<e^{an}\}$ to happen. Thus, the desired upper bound comes from the probability $$\mathbb{P}(Z_{t_n}((-\infty,-\hat{c}n))/|Z_{t_n}|> 1/3),$$ which is very easy to handle by a Markov inequality.
\begin{proof}
\textbf{Lower bound}. Since $\mathbb{P}(X<-x)=\Theta(1) x^{-\alpha}$ as $x\rightarrow\infty$, there exists a constant $C_2>0$ such that for $x>1$,
$$\mathbb{P}(X<-x)\geq C_2x^{-\alpha}.$$
Fix $\varepsilon>0$. By Markov property,
\begin{align}
\mathbb{P}\left(Z_n([\theta x^*n,\infty))<e^{an}\right)&\geq
\mathbb{P}\left(|Z_1|=1,S_{1}\leq -(\hat{c}+\varepsilon)n\right)\mathbb{P}\left(Z_{n-1}([(\theta x^*+\hat{c}+\varepsilon)n,\infty))<e^{an}\right)\cr
&\geq0.9p_1C_2(\hat{c}+\varepsilon)^{-\alpha}n^{-\alpha},
\end{align}
where the last inequality follows from the dominated convergence theorem and the fact that $G(\hat{c}+\varepsilon)<0$ (see Lemma \ref{eqwen}). As a result,
$$
\liminf\limits_{n\rightarrow\infty}\frac{1}{\log n}\log\mathbb{P}\left(Z_n([\theta x^*n,\infty))<e^{an}\right)\geq -\alpha.
$$

\bigskip
\textbf{Upper bound}. Let $t_n= \lfloor h\log n\rfloor$, where $h>\frac{\alpha}{-\log p_1}$. Recall that $Z^u_n$ is the $n$th generation of the sub-BRW emanating from particle $u$, and $Z^i_n$, $i\geq1$ are i.i.d copies of $Z_n$. Fix $\varepsilon\in(0, \hat{c})$. Observe that
\begin{align}\label{er3}
& \mathbb{P}\left(Z_n([\theta x^*n,\infty))<e^{an}\right)\cr
&\leq\mathbb{P}\left( Z_{t_n}([-(\hat{c}-\varepsilon)n,\infty))>\frac{2}{3}|Z_{t_n}| ,\sum_{u\in Z_{t_n}}Z^{u}_{n-t_n}([\theta x^*n,\infty))<e^{an}\right)+\mathbb{P}\left(\frac{1}{|Z_{t_n}|}Z_{t_n}([-(\hat{c}-\varepsilon)n,\infty))\leq\frac{2}{3}\right)\cr
&\leq\mathbb{P}\left(\forall 1\leq i< \frac{2}{3}|Z_{t_n}|,~Z^{i}_{n-t_n}([(\theta x^*+\hat{c}-\varepsilon)n,\infty))<e^{an}\right)
+\mathbb{P}\left(\frac{1}{|Z_{t_n}|}Z_{t_n}\left((-\infty,-(\hat{c}-\varepsilon)n)\right)>\frac{1}{3}\right).
\end{align}
Note that since $G(\hat{c}-\varepsilon)<0$ (see Lemma \ref{eqwen}), by (\ref{locala.s.}) and the dominated convergence theorem, for $n$ large enough,
$$
\mathbb{P}\left(Z_{n-t_n}([(\theta x^*+\hat{c}-\varepsilon)n,\infty))<e^{an}\right)<e^{-1}.
$$
Thus, for the first term on the r.h.s of (\ref{er3}), by Lemma \ref{genernatefun}, for $n$ large enough,
\begin{align}\label{er1}
&\mathbb{P}\left(\forall 1\leq i< \frac{2}{3}|Z_{t_n}|, Z^{i}_{n-t_n}([\theta x^*n,\infty))<e^{an}\right)\cr
&\leq\mathbb{E}\left[\mathbb{P}\left(Z_{n-t_n}([(\theta x^*+\hat{c}-\varepsilon)n,\infty))<e^{an}\right)^{\lfloor \frac{2}{3}|Z_{t_n}|\rfloor}\right]\cr
&\leq\mathbb{E}\left[e^{-\lfloor \frac{2}{3} |Z_{t_n}|\rfloor}\right]\leq 2C_0p^{h\log n}_1\leq 2C_0n^{-\alpha},
\end{align}
where the last inequality follows from the fact that $h>\frac{\alpha}{-\log p_1}$. Let $\mathcal{G}=\sigma(|Z_{t_n}|)$. For the second term on the r.h.s of (\ref{er3}), by Markov inequality,
\begin{align}\label{er2}
&\mathbb{P}\left(\frac{Z_{t_n}\left((-\infty,-(\hat{c}-\varepsilon)n)\right)}{Z_{t_n}}>\frac{1}{3}\right)\cr
&\leq 3\mathbb{E}\left[\frac{1}{Z_{t_n}}Z_{t_n}((-\infty,-(\hat{c}-\varepsilon)n))\right]\cr
&=3\mathbb{E}\left[\mathbb{E}\left[\frac{1}{Z_{t_n}}\sum\limits_{u\in Z_{t_n}}\ind_{(-\infty,-(\hat{c}-\varepsilon)n)}(S_u)\Bigg{|}\mathcal{G}\right]\right]\cr
&=3\mathbb{P}(S_{{\lfloor h\log n\rfloor}}<-(\hat{c}-\varepsilon)n)\leq3C_{\alpha} (h\log n)^2(\hat{c}-\varepsilon)^{-\alpha}n^{-\alpha},
\end{align}
where the last inequality comes from Lemma \ref{pareto}. Plugging (\ref{er1}) and (\ref{er2}) into (\ref{er3}) yields that
\begin{align}
\mathbb{P}\left(Z_n([\theta x^*n,\infty))<e^{an}\right)\leq 2C_0n^{-\alpha}+ 3C_{\alpha} (h\log n)^2(\hat{c}-\varepsilon)^{-\alpha}n^{-\alpha}.
\end{align}
As a result,
$$
\limsup\limits_{n\rightarrow\infty}\frac{1}{\log n}\log\mathbb{P}\left(Z_n([\theta x^*n,\infty))<e^{an}\right)\leq -\alpha.
$$
\par
For the Weibull case, the proof is similar, the mainly changes are to let $t_n=\lfloor h n^{\alpha}\rfloor$ in the upper bound (where $h>\frac{\lambda\hat{c}^{\alpha}}{-\log p_1}$) and use Lemma \ref{webull}. We feel free to omit its proof here.
\end{proof}
\section{Proof of Theorem \ref{main2}: B\"{o}ttcher case, bounded tail}\label{sec2}
In this section, we are going to prove Theorem \ref{main2}. We first present a lemma. Recall that
$$a^*=\left[(\log m-I(x^*))\frac{L+\theta x^*}{L+x^*}+\frac{(1-\theta)x^*}{L+x^*}\log b\right]\wedge[\log m-I(\theta x^*)].$$
 Let $$F_L(c):=\log m-I\left(\frac{\theta x^*+Lc}{1-c}\right)-\frac{a-c\log b}{1-c}=0.$$
\begin{lemma}\label{solution} Assume $a^*<\log m-I(\theta x^*)$. Then for any $a\in\left(a^*,\log m-I(\theta x^*)\right)$,\\
(i) the equation $F_L(c)=0$ has a unique solution on $\left(0,\frac{(1-\theta)x^*}{L+x^*}\right)$, denoted by $\bar{c}(L)$ (or $\bar{c}$);\\
(ii) $\bar{c}(L)$ is continuous w.r.t $L$, and
\begin{equation}\label{fdela}
F_L(\bar{c}(L)-\delta)>0~\text{for}~\delta\in(0,\bar{c}(L));~F_L(\bar{c}(L)+\delta)<0 ~\text{for}~\delta>0.
\end{equation}
\end{lemma}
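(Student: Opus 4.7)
The plan is to reduce Lemma~\ref{solution} to a single concavity statement for a rescaled version of $F_L$, from which existence, uniqueness, the sign inequalities, and continuity in $L$ will all follow by elementary manipulations. I would first set $\tilde F_L(c):=(1-c)F_L(c)$, which has the same zeros as $F_L$ on $(0,1)$, and establish that $\tilde F_L$ is \emph{concave} in $c$. Writing $\Lambda(t)=\log\mathbb{E}[e^{tX}]$ and using the Legendre duality $I(y)=\sup_t\{ty-\Lambda(t)\}$, one has
$$(1-c)\,I\!\left(\tfrac{\theta x^*+Lc}{1-c}\right)=\sup_{t\in\mathbb{R}}\bigl\{t(\theta x^*+Lc)-(1-c)\Lambda(t)\bigr\},$$
so that
$$\tilde F_L(c)=-a+\inf_{t\in\mathbb{R}}\bigl\{[\log m+\Lambda(t)-t\theta x^*]+c[\log b-tL-\log m-\Lambda(t)]\bigr\}.$$
For each fixed $t$, the bracketed quantity is affine in $c$, so $\tilde F_L$ is an infimum of affine functions of $c$ and therefore concave on $(0,1)$.

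\textbf{Endpoint values and existence/uniqueness.} At $c=0$, $F_L(0)=\log m-I(\theta x^*)-a>0$ by Assumption~\ref{assume}(iii). At $c_0:=\frac{(1-\theta)x^*}{L+x^*}$, direct computation gives $\frac{\theta x^*+Lc_0}{1-c_0}=x^*$ and $1-c_0=\frac{L+\theta x^*}{L+x^*}$, whence
$$F_L(c_0)=\frac{L+x^*}{L+\theta x^*}\bigl[a_0^*-a\bigr],\quad a_0^*:=(\log m-I(x^*))\tfrac{L+\theta x^*}{L+x^*}+\tfrac{(1-\theta)x^*}{L+x^*}\log b.$$
The standing hypothesis $a^*<\log m-I(\theta x^*)$ forces $a^*=a_0^*$, and $a>a^*$ then gives $F_L(c_0)<0$. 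The intermediate value theorem produces a zero $\bar c(L)\in(0,c_0)$. Uniqueness on all of $(0,1)$ follows from concavity: if $c_1<c_2$ were two zeros of $\tilde F_L$, writing $c_1=(1-c_1/c_2)\cdot 0+(c_1/c_2)c_2$ and invoking concavity would give $\tilde F_L(c_1)\geq(1-c_1/c_2)\tilde F_L(0)>0$, a contradiction.

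\textbf{Sign inequalities and continuity in $L$.} For $\delta\in(0,\bar c(L))$, writing $\bar c(L)-\delta=(\delta/\bar c(L))\cdot 0+(1-\delta/\bar c(L))\bar c(L)$ and using concavity yields $\tilde F_L(\bar c(L)-\delta)\geq(\delta/\bar c(L))\tilde F_L(0)>0$, so $F_L(\bar c(L)-\delta)>0$. For $\delta>0$ with $\bar c(L)+\delta$ still in the effective domain of $F_L$, uniqueness of the zero together with $F_L(c_0)<0$ and continuity rules out $F_L(\bar c(L)+\delta)\geq 0$; beyond the effective domain one simply has $F_L=-\infty$. For continuity in $L$, note that $(L,c)\mapsto F_L(c)$ is jointly continuous on $(0,\infty)\times(0,1)$ by continuity of $I$ on $[0,\mathrm{ess\,sup}\,X)$ (Lemma~\ref{sratefunction}). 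Fixing $L_0>0$ and small $\varepsilon>0$, the strict sign inequalities $F_{L_0}(\bar c(L_0)-\varepsilon)>0>F_{L_0}(\bar c(L_0)+\varepsilon)$ persist for $L$ in a neighborhood of $L_0$, and uniqueness then forces $|\bar c(L)-\bar c(L_0)|<\varepsilon$.

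\textbf{Main obstacle.} The delicate point is the concavity in Step~1: one has to get concavity on the whole interval $(0,1)$ and not only on the subinterval where the argument of $I$ stays inside $\mathrm{ess\,sup}\,X$. The Legendre representation handles this automatically, since the infimum of affine functions is concave in the extended-real sense and so remains valid wherever $(1-c)I(\cdot)$ is $+\infty$. Once this concavity is established, all remaining assertions in the lemma reduce to manipulating affine chords of a concave function between two endpoints of opposite sign.
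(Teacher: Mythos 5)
Your proof is correct, and it takes a genuinely different route than the paper's. The paper computes the derivative
$F_L'(c)=-\tfrac{1}{(1-c)^2}\bigl[I'\bigl(\tfrac{\theta x^*+Lc}{1-c}\bigr)(L+\theta x^*)+a-\log b\bigr]$ on $(0,c_0)$ with $c_0=\tfrac{(1-\theta)x^*}{L+x^*}$, observes that the bracket is increasing in $c$, case-splits on the three possible monotonicity patterns of $F_L$ (increasing, decreasing, increasing-then-decreasing), and uses $F_L(0)>0$ to get at most one zero; existence then follows from $F_L(c_0)<0$ and the IVT, and continuity in $L$ is asserted via the implicit function theorem. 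You instead pass to $\tilde F_L(c)=(1-c)F_L(c)$ and recognize, via the Legendre/perspective identity $(1-c)\,I\bigl(\tfrac{\theta x^*+Lc}{1-c}\bigr)=\sup_t\{t(\theta x^*+Lc)-(1-c)\Lambda(t)\}$, that $\tilde F_L$ is an infimum of affine functions of $c$, hence concave on $[0,1)$ in the extended-real sense. Everything then falls out of elementary chord arguments between the endpoint values $\tilde F_L(0)>0$ and $\tilde F_L(c_0)=a^*-a<0$ (your identification $a^*=a_0^*$ under the standing hypothesis is correct), plus a sign-persistence argument for continuity in $L$ rather than the implicit function theorem.

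Your approach actually buys more than the paper's. First, concavity of $\tilde F_L$ holds on all of $(0,1)$, including the region $c>c_0$ where the argument of $I$ exceeds $x^*$ and may leave the domain of differentiability of $I$; this gives the inequality $F_L(\bar c(L)+\delta)<0$ for \emph{all} $\delta>0$ directly from the chord inequality $\tilde F_L(\bar c)\ge(\bar c/c)\tilde F_L(c)+(1-\bar c/c)\tilde F_L(0)$ for $c>\bar c$, whereas the paper's derivative argument only covers $(0,c_0)$ and disposes of (\ref{fdela}) with the terse remark that it ``follows by the definition''. Second, your sign-persistence argument for continuity of $L\mapsto\bar c(L)$ avoids having to verify $F_L'(\bar c)\ne 0$, which the implicit function theorem quietly requires (it does hold here, because $\bar c$ lies on the strictly decreasing part of $F_L$, but the paper does not say so). The one place your write-up is slightly loose is the final sentence of the sign-inequality paragraph: ``beyond the effective domain one simply has $F_L=-\infty$'' is only the right thing to say when $\mathrm{ess\,sup}\,X<\infty$; when $\mathrm{ess\,sup}\,X=\infty$ (so that $I$ is finite everywhere) the effective domain is all of $(0,1)$ and the claim is vacuous. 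The cleanest fix is to invoke the concavity chord inequality above uniformly, which covers both cases without distinguishing them.
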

\begin{proof} It is easy to see that $F_L(c)=\log m-\log b -I\left(\frac{\theta x^*+Lc}{1-c}\right)+\frac{\log b-a}{1-c}$, and $F_L(\cdot)$ is differentiable on $(0,\frac{(1-\theta)x^*}{L+x^*})$ (since $\frac{\theta x^*+Lc}{1-c}<x^*$). Thus,
$$F_L'(c)=-\frac{1}{(1-c)^2}\left[I'\left(\frac{\theta x^*+Lc}{1-c}\right)\left(L+\theta x^*\right)+a-\log b\right].$$
Since $I'(x)$ is increasing, $I'\left(\frac{\theta x^*+Lc}{1-c}\right)$ is increasing w.r.t $c$ on $(0,\frac{(1-\theta)x^*}{L+x^*})$. This implies the monotonicity of $F_L(c)$ has three cases: (1) increasing on $(0,\frac{(1-\theta)x^*}{L+x^*})$; (2) decreasing on $(0,\frac{(1-\theta)x^*}{L+x^*})$; (3) increasing on some $(0,h)$, then decreasing on $[h,\frac{(1-\theta)x^*}{L+x^*})$. Since $F(0)=\log m-I(\theta x^*)-a>0$, the monotonicity implies that $F_L(c)=0$ has at most one solution in all these three cases. On the other hand, since $a>a^*$, we have
$$ F_L\left(\frac{(1-\theta)x^*}{L+x^*}\right)=\log m-I(x^*)-\frac{a-\frac{(1-\theta)x^*}{L+x^*}\log b}{1-\frac{(1-\theta)x^*}{L+x^*}}<0.$$
Hence, $F_L(c)=0$ has a unique solution on $\left(0,\frac{(1-\theta)x^*}{L+x^*}\right)$. This concludes (i). For (ii), by implicit function theorem, it is easy to know that $\bar{c}(L)$ is continuous w.r.t $L$. (\ref{fdela}) follows be the definition of $\bar{c}(L)$.
\end{proof}

\bigskip
Now we are ready to prove  Theorem \ref{main2}: if $p_1=0$ and $\text{ess}\inf X=-L\in(-\infty,0)$, then
$$\lim_{n\rightarrow\infty}\frac{1}{n}\log\left[-\log \mathbb{P}\left(Z_n\left([\theta x^*n,+\infty)\right)<e^{an}\right)\right]=
\begin{cases}
\frac{(1-\theta)x^*}{L+x^*}\log b, & a\in[0,a^*);\cr
\bar{c}(L)\log b, & a\in[a^*,\log m-I(\theta x^*)).
\end{cases}
$$
For the lower bound, the strategy is to force every particle produces $b$ offsprings up to time $c^*(L)n$. Then let the displacement of every individual before time $c^*(L)n$ close enough to $-L$. Optimizing for $c^*(L)$ yields the desired lower bound. For the upper bound, our inspiration comes from the proof of Cram\'er's theorem. By the branching property, $Z_n([\theta x^*n,\infty))$ can be bounded below by $\sum^{b^{t_n}}_{i=1}Z^{(i)}_{n-t_n}([\theta x^*n+Lt_n,\infty))$ (sums of i.i.d copies of BRW). Thus, by Chebycheff's inequality, one can get an upper bound. To let this bound tends to zero, we use the dominated convergence theorem (which motivates us to define the following event $E_n$ and $K_n$ to satisfy the dominated convergence theorem's condition).
\begin{proof}
\textbf{Lower bound.} Let
$$
c^*(L)=
\begin{cases}
\bar{c}(L), & a^*<\log m-I(\theta x^*)~\text{and}~a\in(a^*,\log m-I(\theta x^*));\cr
\frac{(1-\theta)x^*}{L+x^*}, &\text{else}.
\end{cases}
$$
For every $L'\in(0,L)$ and $\delta\in(0,1-c^*(L'))$,
 let $t_n:=\lfloor (c^*(L')+\delta)n\rfloor$. Observe that
\begin{align}\label{bounde1}
\mathbb{P}\left(Z_n\left([\theta x^*n,\infty)\right)<e^{an}\right)&\geq \mathbb{P}\left(|Z_{t_n}|=b^{t_n}, \forall u\in Z_{t_n}, S_u\leq -L't_n, Z_n([\theta x^*n,\infty))<e^{an}\right)\cr
&\geq p_b^{\sum^{t_n-1}\limits_{k=0}b^k}\mathbb{P}(X\leq -L')^{\sum^{t_n-1}\limits_{k=0}b^k}\left[\mathbb{P}\left(Z_{n-t_n}([\theta x^*n+L't_n,\infty))<\frac{e^{an}}{b^{t_n}}\right)\right]^{b^{t_n}}.
\end{align}
We first consider the case that $a^*<\log m-I(\theta x^*)~\text{and}~a\in(a^*,\log m-I(\theta x^*))$. In this case, since $c^*(L)$ is continuous w.r.t $L$ and $c^*(L)<\frac{(1-\theta)x^*}{L+x^*}$ (see Lemma \ref{solution}), we have
$$\lim\limits_{L'\rightarrow L \atop \delta\rightarrow 0}\frac{\theta x^*+L'(c^*(L')+\delta)}{1-(c^*(L')+\delta)}=\frac{\theta x^*+Lc^*(L)}{1-c^*(L)}<x^*.$$
Hence, for $L'$ close enough to $L$ and $\delta>0$ small enough,
$$\frac{\theta x^*+L'(c^*(L')+\delta)}{1-(c^*(L')+\delta)}<x^*.$$
As a consequence, applying (\ref{locala.s.}), for $n$ large enough,
\begin{align}\label{bounde2}
&\mathbb{P}\left(Z_{n-t_n}([\theta x^*n+L't_n,\infty))<\frac{e^{an}}{b^{t_n}}\right)\cr
&\geq \mathbb{P}\left(\frac{1}{n-t_n}\log Z_{n-t_n}([\theta x^*n+L't_n,\infty))<\frac{an-t_n\log b}{n-t_n}\right)\geq0.9,
\end{align}
where the last inequality follows by the dominated
convergence theorem and the fact $F_{L'}(c^*(L')+\delta)<0$ (see Lemma \ref{solution}). Now we consider the case of
$a^*\geq\log m-I(\theta x^*)~\text{or}~a\in[0,a^*)$. In this case, since $c^*(L)=\frac{(1-\theta)x^*}{L+x^*}$, we have
$$\frac{(\theta x^*+L'(c^*(L')+\delta))}{1-(c^*(L')+\delta)}>x^*.$$
Thus, by (\ref{locala.s.0}), for $n$ large enough,
\begin{align}\label{bound3}
\mathbb{P}\left(Z_{n-t_n}([\theta x^*n+L't_n,\infty))<\frac{e^{an}}{b^{t_n}}\right)\geq 0.9.
\end{align}
Plugging (\ref{bounde2}) and (\ref{bound3}) into (\ref{bounde1}) yields that
$$\mathbb{P}\left(Z_n([\theta x^*n,\infty))<e^{an}\right)\geq p_b^{\sum^{t_n-1}\limits_{k=0}b^k}\mathbb{P}(X\leq -L')^{\sum^{t_n-1}\limits_{k=0}b^k}(0.9)^{b^{t_n}}\geq \left[0.9p_b\mathbb{P}(X\leq-L')\right]^{b^{t_n}},$$
where the last inequality follows by the fact that $\sum^{t_n-1}\limits_{k=0}b^k<b^{t_n}$. As a consequence, for $L'$ close enough to $L$ and $\delta$ small enough, we have
$$\limsup_{n\rightarrow\infty}\frac{1}{n}\log\left[-\log \mathbb{P}\left(Z_n([\theta x^*n,\infty))<e^{an}\right)\right]\leq (c^*(L')+\delta)\log b.$$
So, the desired lower bound follows by letting $L'\rightarrow L$ and $\delta\rightarrow 0$.

\bigskip
\textbf{Upper bound.} Let $t_n:=\lfloor (c^*(L)-\delta)n\rfloor$ for $\delta\in(0,c^*(L))$. Fix $\epsilon\in(0,1-\log2)$. By the branching property,
\begin{align}\label{sdsfs}
&\mathbb{P}\left(Z_n([\theta x^*n,\infty))<e^{an}\right)\cr
&=\mathbb{P}\left(Z_{t_n}([-Lt_n,+\infty))\geq b^{t_n},Z_n([\theta x^*n,\infty))<e^{an}\right)\cr
&\leq \mathbb{P}\left(\sum^{b^{t_n}}_{i=1}Z^{(i)}_{n-t_n}([\theta x^*n+Lt_n,\infty))<e^{an},|E_n|\geq \lfloor\epsilon b^{t_n}\rfloor \right)+\mathbb{P}\left(|E_n|< \lfloor\epsilon b^{t_n}\rfloor\right),
\end{align}
where $ Z^{(i)}_{n-t_n},~1\leq i\leq b^{t_n}$ are i.i.d copies of $Z_{n-t_n}$ and
$$E_n:=\left\{1\leq i\leq b^{t_n}:Z^{(i)}_{n-t_n}([\theta x^*n+Lt_n,\infty))\geq 1\right\}.$$
Set
$$K_n:=\left\{\forall 1\leq j\leq\lfloor\epsilon b^{t_n}\rfloor, Z^{(j)}_{n-t_n}([\theta x^*n+Lt_n,\infty))\geq 1 \right\}.$$
Observe that, by Stirling's formula $\lim\limits_{n\rightarrow\infty}\frac{n!}{\sqrt {2\pi n} (\frac{n}{e})^n}=1$, we have the following upper bound for combinatorial number: for $n$ large enough,
\begin{equation}\label{string}
\tbinom{b^{t_n}}{\lfloor\epsilon b^{t_n}\rfloor}\leq \exp\left\{-\left[(1-\epsilon)\log(1-\epsilon)+\epsilon\log\epsilon\right]b^{t_n}\right\}\leq e^{(\log2)b^{t_n}}.
\end{equation}
Note that $F_L((c^*(L)-\delta))>0$ (if $c^*(L)=\frac{(1-\theta)x^*}{L+x^*}$ this comes from the definition of $a^*$, if $c^*(L)=\bar{c}(L)$ this follows by Lemma \ref{solution} (ii)). For the first term on the r.h.s of (\ref{sdsfs}), for $n$ large enough,
\begin{align}\label{gfhy1}
&\mathbb{P}\left(\sum^{b^{t_n}}_{i=1}Z^{(i)}_{n-t_n}([\theta x^*n+Lt_n,\infty))<e^{an},|E_n|\geq \lfloor\epsilon b^{t_n}\rfloor \right)\cr
&\leq\tbinom{b^{t_n}}{\lfloor\epsilon b^{t_n}\rfloor}\mathbb{P}\left(\sum^{\lfloor\epsilon b^{t_n}\rfloor}_{j=1}\frac{1}{\lfloor\epsilon b^{t_n}\rfloor}Z^{(j)}_{n-t_n}([\theta x^*n+Lt_n,\infty))<\frac{e^{an}}{\lfloor\epsilon b^{t_n}\rfloor};K_n\right)\cr
&\leq \tbinom{b^{t_n}}{\lfloor\epsilon b^{t_n}\rfloor}\mathbb{P}\left(\sum^{\lfloor\epsilon b^{t_n}\rfloor}\limits_{j=1}\frac{1}{\lfloor\epsilon b^{t_n}\rfloor}\frac{1}{n-t_n}\log Z^{(j)}_{n-t_n}([\theta x^*n+Lt_n,\infty))<\frac{an-\log\lfloor\epsilon b^{t_n}\rfloor}{n-t_n};K_n\right)\cr
&\leq \tbinom{b^{t_n}}{\lfloor\epsilon b^{t_n}\rfloor}\mathbb{E}\left[\exp\left\{\frac{2}{\epsilon F_L((c^*(L)-\delta))}\left[\frac{an-\log\lfloor\epsilon b^{t_n}\rfloor}{n-t_n}\lfloor\epsilon b^{t_n}\rfloor-\sum^{\lfloor\epsilon b^{t_n}\rfloor}\limits_{j=1}\frac{1}{n-t_n}\log Z^{(j)}_{n-t_n}\left([\theta x^*n+Lt_n,\infty)\right)\right]\right\};K_n\right]\cr
&=\tbinom{b^{t_n}}{\lfloor\epsilon b^{t_n}\rfloor}\mathbb{E}^{\lfloor\epsilon b^{t_n}\rfloor}\Bigg[\exp\left\{\frac{2}{\epsilon F_L((c^*(L)-\delta))}\left[\frac{an-\log \lfloor\epsilon b^{t_n}\rfloor}{n-t_n}-\frac{1}{n-t_n}\log Z_{n-t_n}\left([\theta x^*n+Lt_n,\infty)\right)\right]\right\}\cr
&~~~~~~~~~~~~~~~~~~~~~~~~~;Z_{n-t_n}\left([\theta x^*n+Lt_n,\infty)\right)\geq 1\Bigg],
\end{align}
where the second inequality follows by the concavity of $g(x)=\log x$, the third inequality comes from Chebycheff's inequality. Note that
 $$\frac{\theta x^*+L(c^*(L)-\delta)}{1-(c^*(L)-\delta)}<x^*.$$
Thus, by (\ref{locala.s.}),
$$\lim\limits_{n\rightarrow\infty}\frac{2}{\epsilon F_L((c^*(L)-\delta))}\left[\frac{an-\log \lfloor\epsilon b^{t_n}\rfloor}{n-t_n}-\frac{1}{n-t_n}\log Z_{n-t_n}\left([\theta x^*n+Lt_n,\infty)\right)\right]=\frac{2}{\epsilon},~\mathbb{P}-a.s.$$
Therefore, by the dominated convergence theorem and (\ref{string}), (\ref{gfhy1}) yields
\begin{align}\label{tbb1}
\mathbb{P}\left(\sum^{b^{t_n}}_{i=1}Z^{(i)}_{n-t_n}([\theta x^*n+Lt_n,\infty))<e^{an},|E_n|\geq \lfloor\epsilon b^{t_n}\rfloor \right)\leq e^{-(1-\log 2)b^{t_n}}.
\end{align}
For the second term on the r.h.s of (\ref{sdsfs}), for $n$ large enough,
\begin{align}\label{gfhy2}
\mathbb{P}\left(|E_n|<\lfloor\epsilon b^{t_n}\rfloor\right)&\leq \tbinom{b^{t_n}}{\lfloor(1-\epsilon)b^{t_n}\rfloor}\mathbb{P}^{\lfloor(1-\epsilon)b^{t_n}\rfloor}\left(Z_{n-t_n}\left([\theta x^*n+Lt_n,\infty)\right)=0\right)\cr
&\leq e^{(\log 2)b^{t_n}}\mathbb{P}^{\lfloor(1-\epsilon)b^{t_n}\rfloor}\left(Z_{n-t_n}\left([\theta x^*n+Lt_n,\infty)\right)=0\right)\cr
&\leq e^{-\left[(1-\epsilon)-\log2\right]b^{t_n}},
\end{align}
where the second inequality follows from (\ref{string}), and the third inequality follows from (\ref{locala.s.}). Plugging (\ref{tbb1}) and (\ref{gfhy2}) into (\ref{sdsfs}) yields that for $\epsilon\in(0,1-\log2)$ and $n$ large enough,
$$\mathbb{P}\left(Z_n([\theta x^*n,\infty))<e^{an}\right)\leq e^{-(1-\log 2)b^{t_n}}+e^{-\left[(1-\epsilon)-\log2\right]b^{t_n}}\leq2e^{-\left[(1-\epsilon)-\log2\right]b^{t_n}}.$$
Therefore,
$$\liminf_{n\rightarrow\infty}\frac{1}{n}\log\left[-\log \mathbb{P}\left(Z_n([\theta x^*n,\infty))<e^{an}\right)\right]\geq (c^*(L)-\delta)\log b.$$
So, the desired upper bound follows by letting $\delta\rightarrow 0$.
\end{proof}
\section{Proof of Theorem \ref{main3}: B\"{o}ttcher case, Weibull tail}\label{sec3}
In this section, we assume $p_1=0$ and $\mathbb{P}(X\leq-z)=\Theta(1)e^{-\lambda z^{\alpha}}$ as $z\rightarrow\infty$ for some $\alpha\in(0,+\infty)$ and $\lambda>0$. This section will be divided into two sub sections. In the first subsection, we consider the sub Weibull case (i.e. $\alpha\in(0,1]$). In the second subsection, we consider the super Weibull case (i.e. $\alpha>1$).\par
\subsection{Sub Weibull tail: $\alpha\in(0,1]$}
In this subsection, we consider the sub Weibull tail case, i.e., $\mathbb{P}(X\leq-z)=\Theta(1)e^{-\lambda z^{\alpha}}$ for some $\alpha\in(0,1]$. We are going to prove the following:
$$\lim_{n\rightarrow\infty}\frac{1}{n^{\alpha}}\log\mathbb{P}\left(Z_n([\theta x^*n,+\infty))<e^{an}\right)=
\begin{cases}
-\lambda b(x^{*}-\theta x^*)^{\alpha}, &{\alpha \in(0,1],~a\in[0,\log m-I(x^*)]}; \cr
-\lambda b\hat{c}^{\alpha}, &{\alpha \in(0,1],~a\in(\log m-I(x^*),\log m-I(\theta x^*))},
\end{cases}
$$
where $\hat{c}$ is the unique solution of $\log m-I(\theta x^*+\hat{c})=a.$\par
For the lower bound, the strategy is to let the initial particle produces exactly $b$ children and force these children to reach below
$-\hat{c} n$ (where $\hat{c}$ is chosen such that $Z_{n-1}([\theta x^*n+\hat{c}n,\infty))\approx e^{an}$). For the upper bound, we first prove: under the event $\{Z_{t_n}([-\hat{c}n,\infty))\geq b^{t_n}\}$, it is very hard for $\{Z_n([\theta x^*n,\infty))<e^{an}\}$ to happen. Thus, the leading order of $\mathbb{P}\left(Z_n([\theta x^*n,+\infty))<e^{an}\right)$ comes from the probability $\mathbb{P}(Z_{t_n}([-\hat{c}n,\infty))<b^{t_n})$, which has been already considered in \cite{Helower}.
\begin{proof}
\textbf{Lower bound.} Recall that $G(c)=\log m-I(\theta x^*+c)-a$, $\hat{c}=\inf\left\{c\geq 0:G(c)<0\right\}$. We first consider the case
 $a>\log m-I(x^*)$. In this case, we have shown (see Lemma \ref{eqwen}):
 $$\hat{c}~\text{is the unique solution of}~G(c)=0~\text{on}~(0,(1-\theta)x^*);~G(\hat{c}+\delta)<0~\text{for}~ \delta>0.$$
Since $\mathbb{P}(X\leq-z)=\Theta(1)e^{-\lambda z^{\alpha}}$, there exists some constant $C_4>0$ such that for all $z>0$,
  \begin{equation}\label{c4}
  \mathbb{P}(X\leq-z)\geq C_4 e^{-\lambda z^{\alpha}}.
  \end{equation}
  The branching property  gives, for any $\delta>0$ and $n$ large enough,
\begin{align}
\mathbb{P}\left(Z_n([\theta x^*n,\infty))<e^{an}\right)&\geq \mathbb{P}\left(Z_1=b,\forall u\in Z_1,S_u\leq-(\hat{c}+\delta)n,\sum^b_{i=1}Z^{(i)}_{n-1}([(\theta x^*+\hat{c}+\delta)n,\infty))<e^{an}\right)\cr
&\geq \mathbb{P}\left(Z_1=b,\forall u\in Z_1,S_u\leq-(\hat{c}+\delta)n\right)\mathbb{P}\left(Z^{(i)}_{n-1}([(\theta x^*+\hat{c}+\delta)n,\infty))<e^{an}/b,~\forall 1\leq i\leq b\right)\cr
&\geq p_b C^b_4e^{-\lambda (\hat{c}+\delta)^{\alpha}n^{\alpha}b} \mathbb{P}\left(Z_{n-1}([(\theta x^*+\hat{c}+\delta)n,\infty))<e^{an}/b\right)^b\cr
&\geq p_bC^b_4e^{-\lambda (\hat{c}+\delta)^{\alpha}n^{\alpha}b}\mathbb{P}\left(\frac{\log Z_{n-1}([(\theta x^*+\hat{c}+\delta)n,\infty))}{n-1}<\frac{an-\log b}{n-1}\right)^b\cr
&\geq 0.9p_bC^b_4e^{-\lambda (\hat{c}+\delta)^{\alpha}n^{\alpha}b},
\end{align}
where the last inequality follows by (\ref{locala.s.}) and the fact $G(\hat{c}+\delta)<0$. Thus, for any $\delta>0$,
$$\liminf_{n\rightarrow\infty}\frac{1}{n^{\alpha}}\log\mathbb{P}\left(Z_n([\theta x^*n,+\infty))<e^{an}\right)\geq-\lambda (\hat{c}+\delta)^{\alpha}b,$$
So the desired lower bound follows by letting $\delta\rightarrow0$.

\bigskip
\textbf{Upper bound.} Let $t_n:=\lfloor \frac{2\alpha}{\log b}\log n\rfloor$. Fix $\delta\in(0,\hat{c})$. Note that
\begin{align}
&\mathbb{P}\left(Z_n([\theta x^*n,\infty))<e^{an}\right)\cr
&\leq \mathbb{P}\left(Z_{t_n}([-(\hat{c}-\delta)n,\infty))\geq b^{t_n-1},~Z_n([\theta x^*n,\infty))<e^{an}\right)+\mathbb{P}\left(Z_{t_n}([-(\hat{c}-\delta)n,\infty))< b^{t_n-1}\right).\nonumber
\end{align}
For the first term on the right hand side above, note that by the branching property, for $n$ large enough,
\begin{align}
\mathbb{P}\left(Z_{t_n}([-(\hat{c}-\delta)n,+\infty))\geq b^{t_n-1},~Z_n([\theta x^*n,\infty))<e^{an}\right)&\leq
\mathbb{P}\left(\sum^{b^{t_n-1}}_{k=1}Z^{(i)}_{n-t_n}([(\theta x^*+\hat{c}-\delta)n,\infty))<e^{an}\right)\cr
&\leq\mathbb{P}\left(\frac{\log Z_{n-t_n}([(\theta x^*+\hat{c}-\delta)n,\infty))}{n-t_n}<\frac{an}{{n-t_n}}\right)^{b^{t_n-1}}\cr
&\leq(0.9)^{b^{t_n-1}},\nonumber
\end{align}
where the last inequality follows by (\ref{locala.s.}) and the fact that $G(\hat{c}-\delta)>0$ (see Lemma \ref{eqwen}). For the second term, since $|Z_1|\geq b$, for any $\epsilon\in(0,1)$ and $n$ large enough, we have
\begin{align}
\mathbb{P}\left(Z_{t_n}([-(\hat{c}-\delta)n,\infty))< b^{t_n-1}\right)&\leq\mathbb{P}\left(\forall u\in Z_1, Z^u_{t_n-1}\left(\left(-\infty,-(\hat{c}-\delta)n\right)\right)\geq1\right)\cr
&\leq \mathbb{P}\left(Z_{t_n-1}\left(\left(-\infty,-(\hat{c}-\delta)n-X'\right)\right)\geq1\right)^b\cr
&\leq \mathbb{E}\left[Z_{t_n-1}\left(\left(-\infty,-(\hat{c}-\delta)n-X'\right)\right)\right]^b\cr
&=\left[m^{(t_n-1)}\mathbb{P}\left(S_{t_n}\leq-(\hat{c}-\delta)n\right)\right]^b\cr
&\leq m^{b(t_n-1)}e^{-\lambda(1-\epsilon) (\hat{c}-\delta)^{\alpha}n^{\alpha}b},
\end{align}
where $X'$ is a copy of $X$ and independent of $Z_{t_n-1}$ and the last inequality follows from Lemma \ref{webull}. Hence, for $\delta\in(0,\hat{c})$ and $\epsilon\in(0,1)$,
$$
\mathbb{P}\left(Z_n([\theta x^*n,\infty))<e^{an}\right)\leq m^{b(t_n-1)}e^{-\lambda(1-\epsilon) (\hat{c}-\delta)^{\alpha}n^{\alpha}b}+(0.9)^{b^{t_n}}\leq 2m^{b(t_n-1)}e^{-\lambda(1-\epsilon) (\hat{c}-\delta)^{\alpha}n^{\alpha}b},
$$
which implies the desired upper bound.\par
 Now, it remains to deal with the case that $\log m-I(x^*)>0$ and $a\in[0,\log m-I(x^*)]$. In this case, $\hat{c}=(1-\theta)x^*$. The proof is basically the same as above. The only difference is to replace (\ref{locala.s.}) with (\ref{locala.s.0}).
\end{proof}
\subsection{Super Weibull tail: $\alpha>1$}
In this subsection, we consider the super Weibull tail case, i.e., $\mathbb{P}(X\leq-z)=\Theta(1)e^{-\lambda z^{\alpha}}$ for some $\alpha\in(1,+\infty)$. We are going to prove the following:\\
if $a\in[0,\log m-I(x^*)]$, then

$$\lim_{n\rightarrow\infty}\frac{1}{n^{\alpha}}\log\mathbb{P}\left(Z_n([\theta x^*n,+\infty))<e^{an}\right)=\left(b^{\frac{1}{\alpha-1}}-1\right)^{\alpha-1}(x^{*}-\theta x^*)^{\alpha};$$

\noindent if $a\in(\log m-I(x^*),\log m-I(\theta x^*))$, then
$$\lim_{n\rightarrow\infty}\frac{1}{n^{\alpha}}\log\mathbb{P}\left(Z_n([\theta x^*n,+\infty))<e^{an}\right)=\left(b^{\frac{1}{\alpha-1}}-1\right)^{\alpha-1}\hat{c}^{\alpha}.$$
 The proof of the lower bound that is given below uses a method similar to that of \cite[Theorem 1.3]{Helower}. We force the branching random walk to behave a $b$-regular tree up to some time $t_n=\Theta(1)\log n$. Then, we let each individual on this regular tree to make displacement smaller than some $-a_k$ (where $k$ is the generation of the individual). Obviously, to make the lower bound as large as possible, $\{a_k\}_{k\geq 1}$ should be the solution of $$\inf\limits_{\sum^{t_n}\limits_{i=1}a_k=\hat{c}n\atop a_k\geq 0, k\geq 1}\sum^{t_n}_{k=1}a^{\alpha}_kb^k.$$
 This help us to find the right $\{a_k\}_{k\geq 1}$. For the upper bound, we use the same method as sub Weibull case.

\begin{proof}
\textbf{Lower bound.} Recall that $G(c)=\log m-I(\theta x^*+c)-a$, $\hat{c}=\inf\left\{c\geq 0:G(c)<0\right\}$. Again, we first consider the case
 $a>\log m-I(x^*)$. Set $t_n:=\lfloor \frac{\alpha}{2\log b}\log n\rfloor$, $b_{\alpha}:=b^{\frac{1}{\alpha-1}}$ and $a_k:=\frac{b_{\alpha}-1}{b^k_{\alpha}}(\hat{c}+\delta)n$, where $\delta$ is a positive constant. By simple calculation, for $n$ large enough,
\begin{equation}\label{akgj}
(\hat{c}+\delta/2)n\leq\sum^{t_n}\limits_{k=1}a_{k}\leq(\hat{c}+\delta)n,~\sum^{t_n}\limits_{k=1}a^{\alpha}_{k}b^k\leq \left(b^{\frac{1}{\alpha-1}}-1\right)^{\alpha-1}(\hat{c}+\delta)^{\alpha}n^{\alpha}.
\end{equation}
Denote by $X_u$ the displacement of particle $u$. By the branching property, for $n$ large enough,
\begin{align}\label{sadasf}
&\mathbb{P}\left(Z_n([\theta x^*n,\infty))<e^{an}\right)\cr
&\geq \mathbb{P}\left(|Z_{t_n}|=b^{t_n};\forall u\in Z_{i}, 1\leq i\leq t_n, X_u\leq -a_{|u|}; \sum^{b^{t_n}}\limits_{i=1}Z^{(i)}_{n-t_n}\left(\left[\theta x^*n+\sum^{t_n}\limits_{k=1}a_{k},\infty\right)\right)<e^{an}\right)\cr
&\geq p_b^{\sum^{t_n-1}\limits_{k=0}b^k}\prod^{t_n}_{k=1}\mathbb{P}(X_k\leq -a_k)^{b^k}\left[\mathbb{P}\left(Z_{n-t_n}\left(\left[\theta x^*n+\sum^{t_n}\limits_{k=1}a_{k},\infty\right)\right)<\frac{e^{an}}{b^{t_n}}\right)\right]^{b^{t_n}}\cr
&\geq p_b^{\sum^{t_n-1}\limits_{k=0}b^k}C^{\sum^{t_n}\limits_{k=1}b^k}_4
\exp\left\{-\lambda\sum^{t_n}_{k=1}a^{\alpha}_kb^k\right\}\left[\mathbb{P}\left(\frac{1}{n-t_n}\log Z_{n-t_n}\left(\left[\theta x^*n+(\hat{c}+\delta/2)n,\infty\right)\right)<\frac{an-t_n\log b}{n-t_n}\right)\right]^{b^{t_n}},
\end{align}
where $C_4$ is defined in (\ref{c4}). Since (\ref{locala.s.}) and $G(\hat{c}+\delta/2)<0$ (see Lemma \ref{eqwen}), for $n$ large enough, we have
\begin{equation}\label{dsfasd}
\mathbb{P}\left(\frac{\log Z_{n-t_n}\big(\left[(\theta x^*+\hat{c}+\delta/2)n,\infty\right)\big)}{n-t_n}<\frac{an-t_n\log b}{n-t_n}\right)\geq 0.9.
\end{equation}
So, plugging (\ref{akgj}) and (\ref{dsfasd}) into (\ref{sadasf}) yields that
\begin{align}
\mathbb{P}\left(Z_n(\theta x^*n+B)<e^{an}\right)&\geq p_b^{\sum^{t_n-1}\limits_{k=0}b^k}C^{\sum^{t_n}\limits_{k=1}b^k}_4(0.9)^{b^{t_n}}
\exp\left\{-\lambda\sum^{t_n}_{k=1}a^{\alpha}_kb^k\right\}^{b^{t_n}}\cr
&\geq (0.9C_4p_b)^{b^{t_n+1}} \exp\left\{-\lambda \left(b^{\frac{1}{\alpha-1}}-1\right)^{\alpha-1}(\hat{c}+\delta)^{\alpha}n^{\alpha}\right\}\cr
&\geq (0.9C_4p_b)^{bn^{\alpha/2}}\exp\left\{-\lambda \left(b^{\frac{1}{\alpha-1}}-1\right)^{\alpha-1}(\hat{c}+\delta)^{\alpha}n^{\alpha}\right\},
\end{align}
where the second inequality use the fact $\sum^{t_n}\limits_{k=0}b^k<b^{t_n+1}$. Thus, for every small $\delta>0$, we have
$$\liminf_{n\rightarrow\infty}\frac{1}{n^{\alpha}}\log\mathbb{P}\left(Z_n([\theta x^*n,+\infty)<e^{an}\right)\geq-\lambda\left(b^{\frac{1}{\alpha-1}}-1\right)^{\alpha-1}(\hat{c}+\delta)^{\alpha}.$$
The desired lower bound finally follows by letting $\delta\rightarrow 0$.

\bigskip
\textbf{Upper bound.} The upper bound is similar to the sub Weibull tail case. Set $t_n:=\lfloor \frac{2\alpha}{\log b}\log n\rfloor$, $\delta_n:=\lfloor \frac{\alpha}{2\log b}\log n\rfloor$. For $\delta\in(0,\hat{c})$, we have
\begin{align}
&\mathbb{P}\left(Z_n([\theta x^*n,\infty))<e^{an}\right)\cr
&\leq \mathbb{P}\left(Z_{t_n}([-(\hat{c}-\delta)n,\infty))\geq b^{t_n-\delta_n},~Z_n([\theta x^*n,\infty))<e^{an}\right)+\mathbb{P}\left(Z_{t_n}([-(\hat{c}-\delta)n,\infty))< b^{t_n-\delta_n}\right).\nonumber
\end{align}
For the first term on the r.h.s above, note that by the branching property and (\ref{locala.s.}), we have
$$
\mathbb{P}\left(Z_{t_n}([-(\hat{c}-\delta)n,\infty))\geq {b^{t_n-\delta_n}},~Z_n([\theta x^*n,\infty))<e^{an}\right)\leq(0.9)^{b^{t_n-\delta_n}}.
$$
For the second term, \cite[(3.36)]{Helower} gives that for any $\epsilon>0$ and $n$ large enough,
$$
\mathbb{P}\left(Z_{t_n}([-(\hat{c}-\delta)n,+\infty))< b^{t_n-\delta_n}\right)\leq \exp\left\{-\lambda(1-\epsilon)\left(b^{\frac{1}{\alpha-1}}-1\right)^{\alpha-1}(1-b^{-\delta_n})^{1+\alpha}(\hat{c}-\delta)^{\alpha}n^{\alpha}\right\}.
$$
Thus, for any small $\epsilon>$ and $\delta>0$, we have
$$\limsup_{n\rightarrow\infty}\frac{1}{n^{\alpha}}\log\mathbb{P}\left(Z_n([\theta x^*n,+\infty))<e^{an}\right)\geq-\lambda(1-\epsilon)\left(b^{\frac{1}{\alpha-1}}-1\right)^{\alpha-1}(\hat{c}-\delta)^{\alpha},$$
which implies the result.\par
It remains to treat the case of $\log m-I(x^*)>0$ and $a\in[0,\log m-I(x^*)]$. In this case, one can show that $\hat{c}=(1-\theta)x^*$. The proof is basically the same as above. The only difference is to replace (\ref{locala.s.}) with (\ref{locala.s.0}).
\end{proof}
\textbf{Acknowledgements}
I am grateful to my supervisor Hui He for his constant help while working
on this subject, all the useful discussions and advices. I also would like to thank Lianghui Luo for interesting
discussions.

\vspace{0.2cm}
Shuxiong Zhang\\
School of Mathematical Sciences, Beijing Normal University, Beijing 100875, People's Republic of China.\\
E-mail: shuxiong.zhang@qq.com.
\end{document}